\newcommand{\R}{\mathbb{R}}
\newcommand{\Z}{\mathbb{Z}}
\newcommand{\Q}{\mathbb{Q}}
\renewcommand{\H}{\mathbb{H}}
\newcommand{\g}{\mathfrak{g}}
\newcommand{\h}{\mathfrak{h}}
\renewcommand{\epsilon}{\varepsilon}
\renewcommand{\phi}{\varphi}
\renewcommand{\hat}{\widehat}
\newcommand{\cB}{\mathcal{B}}
\newcommand{\cF}{\mathcal{F}}
\newcommand{\Hd}{\mathcal{H}}
\newcommand{\cL}{\mathcal{L}}
\newcommand{\cV}{\mathcal{V}}
\newcommand{\dist}{\mathrm{dist}}
\DeclareMathOperator{\diam}{diam}
\newtheorem{theorem}{Theorem}[section]
\newtheorem{proposition}[theorem]{Proposition}
\newtheorem{lemma}[theorem]{Lemma}
\theoremstyle{remark}
\newtheorem{remark}[theorem]{Remark}
\begin{document}
\title{BiLipschitz decomposition of Lipschitz maps between Carnot groups}

\author{Sean Li}
\date{\today}
\address{Department of Mathematics, The University of Chicago, Chicago, IL 60637}
\email{seanli@math.uchicago.edu}

\begin{abstract}
  Let $f : G \to H$ be a Lipschitz map between two Carnot groups.  We show that if $B$ is a ball of $G$, then there exists a subset $Z \subset B$, whose image in $H$ under $f$ has small Hausdorff content, such that $B \backslash Z$ can be decomposed into a controlled number of pieces, the restriction of $f$ on each of which is quantitatively biLipschitz.  This extends a result of \cite{meyerson}, which proved the same result, but with the restriction that $G$ has an appropriate discretization.  We provide an example of a Carnot group not admitting such a discretization.
\end{abstract}

\maketitle

\section{Introduction}
Let $\Hd^n_\infty$ denote the Hausdorff $n$-content (the definition will be reviewed in the next section).  We prove the following theorem.
\begin{theorem} \label{t:main}
  Let $G$ and $H$ be Carnot groups endowed with their Carnot-Carath\'eodory metric and $N$ be the Hausdorff dimension of $G$.  There exists some $c > 0$ depending only on $G$ and $H$ with the following property.  For each $\delta > 0$, there exists some $M = M(\delta) > 1$ so that if $x \in G$, $R > 0$, and $f : B(x,R) \to H$ is a 1-Lipschitz function, then there exist closed sets $\{F_i\}_{i=1}^M$ of $B(x,R)$ so that
  \begin{align*}
    \Hd^N_\infty \left( f\left( B(x,R) \backslash \bigcup_{i=1}^M F_i \right) \right) < c\delta R^N,
  \end{align*}
  and
  \begin{align*}
    |f(x) - f(y)| \geq \delta |x-y|, \qquad \forall x,y \in F_i, \forall i \in \{1,...,M\}.
  \end{align*}
\end{theorem}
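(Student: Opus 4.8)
The plan is to remove from $B(x,R)$ two families of ``bad'' sets, whose images under $f$ have small Hausdorff content, and to decompose what remains into $M(\delta)$ biLipschitz pieces by a colouring argument carried out at every scale at once. By a dilation and a translation we may assume $x = e$, $R = 1$. Two facts will be used repeatedly. First, since $G$ is Ahlfors $N$-regular and $1$-Lipschitz maps do not increase $\Hd^N$, every $A \subset B(e,1)$ satisfies $\Hd^N_\infty(f(A)) \le \Hd^N(f(A)) \le \Hd^N(A) \lesssim \mu(A)$, where $\mu$ is Haar measure; thus any subset of measure $\lesssim \delta$ may be removed at a cost of $\lesssim \delta$ in Hausdorff content. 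Second, the area formula for Lipschitz maps between Carnot groups gives $\int_H \#f^{-1}(w)\, d\Hd^N(w) = \int_{B(e,1)} Jf\, d\mu$, with $Jf \le 1$ because $f$ is $1$-Lipschitz; in particular the right-hand side is $\lesssim 1$.

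First I would discard: the Haar-null set $Z_0$ where $f$ is not Pansu differentiable (whose image is $\Hd^N$-null by the first fact); the set $Z_1 = \{\, p : \mathrm{conorm}(Df(p)) < 2\delta \,\}$ (on which some layerwise stretching of $Df(p)$ is $\lesssim \delta$ while the others are $\le 1$, so $Jf \lesssim \delta$ there, and hence $\Hd^N(f(Z_1)) \le \int_{Z_1} Jf\, d\mu \lesssim \delta$ by the area formula); and $Z_2 = f^{-1}\{\, w : \#f^{-1}(w) > K \,\}$ with $K \asymp 1/\delta$ (so $\Hd^N_\infty(f(Z_2)) \le \Hd^N(\{\#f^{-1} > K\}) \le K^{-1}\int_H \#f^{-1}\, d\Hd^N \lesssim \delta$). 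On $B' := B(e,1)\setminus(Z_0 \cup Z_1 \cup Z_2)$ the map $f$ has multiplicity at most $K$ and the conorm of $Df$ is at least $2\delta$ at every point; consequently, near each point of $B'$, $f$ has no pair of points at arbitrarily small separation whose images are $\delta$-contracted.

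The decomposition of $B'$ I would build from a Christ-type system of dyadic cells on $G$ at the scales $2^{-k}$, each $N$-regular at its scale, together with an offset $C = C(\delta) \asymp \log(1/\delta)$. First discard the maximal cells $Q$ (at any scale) with $\diam f(Q) < \delta \diam Q$: being pairwise disjoint, they remove image content $\lesssim \sum_Q (\delta \diam Q)^N \lesssim \delta^N \sum_Q \mu(Q) \lesssim \delta$. Then, for each $k$, form the graph on the surviving cells of scale $2^{-(k+C)}$ in which two cells are joined when they contain points $y, z$ with $|y-z| \asymp 2^{-k}$ and $|f(y)-f(z)| < \delta |y-z|$ (the offset guaranteeing the two cells are distinct), and properly colour this graph with a palette of size $P = P(\delta)$; combining, over all $k$ --- if the per-scale colourings are chosen compatibly, which is the delicate point below --- the colour a point receives at scale $2^{-k}$ yields a colouring $\chi$ such that $\chi(y) = \chi(z)$ implies $|f(y)-f(z)| \ge \delta|y-z|$ for every pair. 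The sets $F_i := \overline{\chi^{-1}(i)}$ are then closed, $f$ is $\delta$-biLipschitz from below on each by continuity, and $B(e,1) \setminus \bigcup_i F_i$ lies in the discarded sets.

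The main obstacle is to make this produce only $M(\delta)$ pieces --- i.e.\ to bound the palette $P$, and to control how the colourings at different scales combine, by functions of $\delta$ alone, uniformly over $f$. Bounding $P$ is a packing estimate: a surviving cell that collides with $Q$ at scale $2^{-k}$ is, because $f$ is $1$-Lipschitz and $C$ is large, contained in $f^{-1}(B(w_0, C'\delta\, 2^{-k}))$ for some $w_0 \in f(Q)$, and on $B'$ the area formula bounds the measure of this preimage by $\tfrac{K}{\delta}\Hd^N(B(w_0, C'\delta\, 2^{-k})) \lesssim \delta^{N-2} 2^{-kN}$; since such cells are disjoint with measure $\asymp 2^{-(k+C)N}$, there are at most $\lesssim \delta^{N-2} 2^{CN} \lesssim \delta^{-O(1)}$ of them, so $P \lesssim \delta^{-O(1)}$. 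This count is precisely where \cite{meyerson} counted the points of a lattice-like discretization of $G$; the contribution here is that it can be carried out using only Christ cells, the area formula, Ahlfors regularity, and Vitali covering --- tools available in every Carnot group. The harder and more delicate part, which I expect to be the bulk of the work, is to organize the three discards and the cell-collapse removal so that the multiplicity and Jacobian controls genuinely hold on the cells entering each graph (this may force the construction to be iterated on the part of $B'$ not yet resolved), and, above all, to combine the per-scale colourings into a globally bounded number of pieces despite the colouring running over infinitely many scales and despite Pansu differentiability having no uniform modulus. It is exactly this uniformity that a discretization of $G$ would supply automatically, which is why one must also show --- as the abstract promises --- that some Carnot groups admit no discretization.
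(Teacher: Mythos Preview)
Your proposal correctly anticipates the overall architecture --- remove a set of small image, then run a coding/colouring scheme on dyadic cells --- and your palette bound at a single scale is in the right spirit. But the proposal has a genuine gap at exactly the point you flag as ``the harder and more delicate part'': you have no mechanism to ensure that, at each surviving point, only \emph{boundedly many} scales are bad, and without that the concatenated code has unbounded length and the number of pieces is not controlled. Your removals $Z_0$, $Z_1$, $Z_2$ are pointwise/infinitesimal and do not yield any such scale-count: knowing that $Df(p)$ has conorm $\ge 2\delta$ says nothing quantitative about how $f$ behaves on balls of any fixed radius around $p$, precisely because Pansu differentiability has no uniform modulus, as you yourself note. (Also, your palette argument needs the Jacobian lower bound to hold on the \emph{cells} being counted, not merely on $B'$; since $B'$ is only a measurable set this fails as stated, and the iteration you hint at does not obviously terminate with $M(\delta)$ pieces.)

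The paper supplies the missing mechanism via a scale-indexed quantity $\alpha_f^{(p)}(Q,L)$ measuring, roughly, the deviation of $f$ from affine on the cube $Q$, imported from the coarse differentiation theory of \cite{li}. Three facts about it do all the work: (i) a Carleson packing estimate $\sum_{Q\in\Delta(S)}\alpha_f^{(p)}(Q,L)\,|Q|\le C(L)\,|S|$, which immediately bounds, outside a set of measure $<\delta|S|$, the number of scales at which $\alpha$ is large --- this is exactly the uniform-in-$f$ bound on bad scales that your scheme lacks; (ii) when $\alpha_f^{(p)}(Q,\cdot)$ is small, $f$ is $\delta$-close to a homomorphism $g\cdot L$ on a dilate of $Q$; (iii) if additionally $\Hd^N_\infty(f(Q))\gtrsim\delta|Q|$, then that approximating homomorphism cannot collapse, and a covering lemma for homomorphisms (the paper's Lemma~\ref{l:nets-lemma}: $L(B(0,1))$ is covered by $\lesssim\epsilon^{1-N}$ balls of radius $\epsilon$ whenever $L$ shrinks some point by a factor $\epsilon$) forces $f$ to be $\delta$-biLipschitz on far-apart pairs in $2Q$. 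With (i)--(iii), the standard David--Jones coding goes through verbatim.

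Two further contrasts are worth recording. First, the paper never needs your $Z_1$ or $Z_2$; the large-image family $\mathcal B_1$ and the Carleson family $\mathcal B_2$ are the only discards. Second, the packing step that in \cite{meyerson} used a lattice and that you replace by the area formula is, in the paper, carried out not on $f$ but on the approximating \emph{homomorphism} $L$, where it becomes elementary convex geometry in $T_L(\mathfrak g)\subset\mathfrak h$; this sidesteps the issue that your Jacobian/multiplicity bounds are only almost-everywhere statements. Finally, discretizability is not what provides uniformity across scales (the Carleson estimate does that in every version of this theorem, including Jones's); discretizability in \cite{meyerson} was used only for the single-scale count, and the paper's contribution is to perform that count without it.
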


The first results of these type were published independently by P. Jones \cite{jones} and G. David \cite{david} (in the same issue of the same journal) and were motivated by problems in the field of singular integrals.  Jones proved Theorem \ref{t:main} for Lipschitz maps $f : [0,1]^n \to \R^m$.  Jones's result was later generalized by Schul in \cite{schul} where he showed the same result except now $f : [0,1]^n \to (X,d)$ takes value in a general metric space and can be a Lipschitz map up to a controlled additive error.  The result was also generalized by G.C. David (not the same author of \cite{david}) in \cite{david-gc} where he proved the result for maps between certain topological manifolds of equivalent (topological and Hausdorff) dimensions.

Carnot groups are a class of metric groups that are natural generalizations of Euclidean spaces.  They have many familiar geometric qualities, including properness, geodicity, a dilation structure, and transitiveness of isometries (in fact, these four attributes characterize sub-Finsler Carnot group \cite{ledonne}) although they may not be abelian.  Carnot groups also admit a class of nested dyadic cubes (to be described in the next section) that allows one to import many arguments from harmonic analysis.  Thus, it is natural to ask whether certain analytic or geometric statements can be generalized from the Euclidean world to the Carnot world.  This has been an active area of research.

BiLipschitz decomposition for Lipschitz maps between Carnot groups was first studied in \cite{meyerson}.  There, the author proved Theorem \ref{t:main} when $H$ is a Carnot group and $G$ is another Carnot group that is appropriately discretizable (see Definition 2.11 of the same paper).  There are examples of Carnot groups that cannot be discretized, and we give an example in the next section.  Thus, our theorem extends the result of \cite{meyerson} to arbitrary pairs of Carnot groups.  It should be noted that the result of \cite{david-gc} gives that self-maps of any Carnot group satisfy Theorem \ref{t:main} as Carnot groups satisfy the topological manifold conditions studied there.  However, the result of \cite{david-gc} does not apply for maps between Carnot groups of different dimensions.


Let $G$ be a $N$-dimensional Carnot group and $X$ be an arbitrary metric space.  One natural question now is, if $f: G \to X$ is a Lipschitz map with positive Hausdorff $N$-measure image there, does there always exists a positive measure subset of $G$ on which $f$ is biLipschitz?  For the specific case when $G = \H$, the Heisenberg group (which has Hausdorff dimension 4) this was asked as Question 24 in \cite{heinonen-semmes}.  In a future paper joint with E. Le Donne and T. Rajala, we show this is not possible.  Recall that a metric space is Ahlfors $n$-regular if there exists some $C \geq 1$ so that
$$\frac{1}{C} r^n \leq \Hd^n(B(x,r)) \leq Cr^n, \qquad \forall x \in X, r < \diam X.$$
We construct in \cite{llr} an Ahlfors 4-regular metric space that the Heisenberg group Lipschitz surjects onto, but which has no biLipschitz pieces.  Thus, one cannot hope for an analogue of Theorem \ref{t:main} when the target space is an arbitrary metric space.

BiLipschitz decomposition theorems all follow a similar strategy.  The first step is to decompose the domain into a set of dyadic-like cubes.  Then one proves a lemma stating that if a cube whose image under $f$ has large Hausdorff content and a certain wavelet coefficient-like quantity of $f$ on the cube is small, then $f$ acts biLipschitzly on points of the cube that are far apart.  This is the step that usually heavily involves the geometry of the setting.  Lemma \ref{l:weak-bilip} gives this statement for us.  One then uses the fact that a weighted sum of the wavelet coefficient-like quantity is bounded to show that the quantity cannot be big for many cubes.  After throwing out the cubes that have small image, one can then decompose most of the rest of the domain into a controlled number of pieces on which $f$ is biLipschitz using a coding scheme.  For us, the wavelet coefficient-like quantity will be the deviation of $f$ from an affine function on a cube, a quantity that was studied in \cite{li}. \\

\noindent {\bf Acknowledgements.} I am grateful to Jonas Azzam, Enrico Le Donne, and Pierre Pansu for enlightening discussions.  The research presented here is supported by NSF postdoctoral fellowship DMS-1303910.

\section{Preliminaries and notations}
Given a metric space $(X,d)$, a subset $E \subseteq X$, and two numbers $N \geq 0$ and $\delta \in (0,\infty]$, we define
\begin{align*}
  \Hd^N_\delta(E) := \inf \left\{ \sum_{i=1}^\infty (\diam A_i)^N : E \subseteq \bigcup_{i=1}^\infty A_i, ~\diam A_i \leq \delta \right\}.
\end{align*}
Note that if $s \leq t$, then $\Hd^N_s(E) \geq \Hd^N_t(E)$.  We then let $\Hd^N(E) := \lim_{\delta \to 0} \Hd^N_\delta(E)$ be the Hausdorff $N$-measure and $\Hd^N_\infty(E)$ be the Hausdorff $N$-content.  It follows that $\Hd^N_\infty(E) \leq \Hd^N(E)$ always.  In particular, if $\Hd^N(E) = 0$, then $\Hd^N_\infty(E) = 0$ (the reverse implication also holds).

We will use the convention that if $\lambda > 1$ and $E \subset X$, then
\begin{align}
  \lambda E := \{x \in X : \dist(x,E) \leq (\lambda - 1) \diam E\}. \label{e:lambda-E}
\end{align}

A Carnot group $G$ is a simply connected Lie group whose Lie algebra $\g$ is stratified, that is, it can be decomposed into direct sums of subspaces
\begin{align*}
  \g = \bigoplus_{i=1}^r \cV_i
\end{align*}
where $[\cV_1,\cV_j] = \cV_{j+1}$ for $j \geq 1$.  Here, it is understood that $\cV_k = 0$ for all $k > r$.  The layer $\cV_1$ is called the horizontal layer and if $\cV_r \neq 0$ then $r$ is the (nilpotency) step of $G$.  If we are dealing with multiple graded Lie algebras, say $\g$ and $\h$, we will write $\cV_i(\g)$ and $\cV_j(\h)$ instead to differentiate the layers between the different Lie algebras.  For simplicity, we will suppose all the constants in all Lie bracket structures are 1.  All proofs will go through in the general case and the results will only differ by some factor depending on these constants.

As exponential maps are diffeomorphisms between a Carnot group and its Lie algebra, we can use it to canonically identify elements of the Lie group $G$ to the Lie algebra $\g$.  This shows that a Carnot group is topologically a Euclidean space.
From now on, if we write an element of $G$ as $\exp(g_1 + ... + g_r)$, it is understood that $g_i \in \cV_i(\g)$.  We will write the identity element as 0.  Let $|\cdot|$ denote the standard Euclidean norm on $\g$ (viewed as $\R^n$).  Then we can make sense of $|g_i|$ and $|g_i - h_i|$ and so forth.

Group multiplication in Carnot groups $G$ can be expressed in the Lie algebra level $\g$ using the Baker-Campbell-Hausdorff (BCH) formula:
\begin{align*}
  \log(e^U e^V) = \sum_{k > 0} \frac{(-1)^{k-1}}{k} \underset{\substack{r_i+s_i > 0,\\1 \leq i \leq k}}{\sum} a(r_1,s_1,...,r_k,s_k) (ad U)^{r_1} (ad V)^{s_1} \cdots (ad U)^{r_k} (ad V)^{s_k-1} V.
\end{align*}
Here, $(ad X)Y = [X,Y]$ and $a(r_1,s_1,...,r_k,s_k)$ are constants depending only on the Lie algebra structure of $\g$.  Because we are working in the exponential coordinates of $G$, the BCH formula allows us to compute on the level of the coordinates $G$.  Specifically, given $\exp(g_1 + ... + g_r)$ and $\exp(h_1 + ... + h_r) \in G$, we get that
\begin{align*}
  \exp(g_1 + ... + g_r) \cdot \exp(h_1 + ... + h_r) = \exp(g_1 + h_1 + g_2 + h_2 + P_2 + ... + h_r + g_r + P_r)
\end{align*}
where $P_k$ are polynomials in the coordinates $g_1,...,g_{k-1},h_1,...,h_{k-1}$.

An important property of Carnot groups is that they admit a family of dilation automorphisms.  For each $\lambda > 0$, we can define
\begin{align*}
  \delta_\lambda : G &\to G \\
  \exp(g_1 + g_2 + ... + g_r) &\mapsto \exp(\lambda g_1 +  \lambda^2 g_2 + ... + \lambda^r g_r)
\end{align*}
where we use the exponential coordinates of $G$.

A homogeneous norm on a Carnot group $G$ is a function $\mathcal{N} : G \to [0,\infty)$ such that
\begin{align*}
  \mathcal{N}(g) &= \mathcal{N}(g^{-1}), \\
  \mathcal{N}(\delta_\lambda(g)) &= \lambda \mathcal{N}(g), \\
  \mathcal{N}(g) &= 0 \Leftrightarrow g = 0.
\end{align*}
Homogeneous norms induce left-invariant homogeneous (semi)metrics by the formula $d(x,y) = \mathcal{N}(x^{-1}y)$ and vice versa.  Here, $d$ may not satisfy the triangle inequality, but there does exist some $C \geq 1$ so that
\begin{align*}
  d(x,z) \leq C(d(x,y) + d(y,z)), \qquad \forall x,y,z \in G.
\end{align*}
Any two metrics on $G$ induced by two homogeneous norms are biLipschitz equivalent.

We will define a special group norm as
\begin{align*}
  N_\infty : G &\to [0,\infty) \\
  \exp(g_1 + ... + g_r) &\mapsto \max_{1 \leq k \leq r} \lambda_k |g_k|^{1/k}.
\end{align*}
It was shown in \cite[Lemma II.1]{guivarch} (see also \cite[Lemma 2.5]{breuillard}) that for each Carnot group there exists some set of positive scalars $\{\lambda_k\}_{k=1}^r$ so that $d_\infty$, the associated metric, satisfies the actual triangle inequality.  We will suppose for simplicity that $\lambda_k = 1$ for all $k$.  This will change everything we do by only a constant.

Carnot groups also admit a path metric that we describe now.  We begin by constructing a left-invariant tangent subbundle $\Hd$ of the tangent bundle which is just $\cV_1$ pushed to every point by left translation.  We can similarly endow $\Hd$ with a left-invariant field of inner products.  We then define the Carnot-Carath\'eodory metric between $x$ and $y$ in $G$ to be
\begin{align*}
  d_{cc}(g,h) = \inf \left\{ \int_0^1 |\gamma'(t)|_{\gamma(t)} ~dt : \gamma(0) = g, \gamma(1) = h, \gamma' \in \Hd_{\gamma(t)} \right\}.
\end{align*}
Here, $|\cdot|_{\gamma(t)}$ is the norm coming from the left invariant inner product.  For Carnot groups, such a path between any two points always exists (see {\it e.g.} \cite[Chapter 2]{montgomery}) and so the metric is finite.  It is clearly left invariant and scales with dilation by construction.  Thus, the Carnot-Carath\'eodory metric is biLipschitz equivalent to any metric induced by a homogeneous norm.

Let $L : G \to H$ be a Lie group homomorphism between Carnot groups.  As $G$ and $H$ are simply connected, we can then lift it to a linear transform of the Lie algebras $T_L : \g \to \h$ by the formula $T_L = \exp^{-1} \circ L \circ \exp$.  We will always assume that $T_L(\cV_1(\g)) \subseteq \cV_1(\h)$ and so will not explicitly say this from now on.  This is necessary for the $L$ to be Lipschitz.  It follows then that
\begin{align}
  T_L(\cV_j(\g)) \subseteq \cV_j(\h), \qquad \forall j. \label{e:layer-inclusion}
\end{align}
In the exponential coordinates, we then have that
\begin{align*}
  L(\exp(g_1 + ... + g_r)) = \exp(T_L(g_1) + ... + T_L(g_r)).
\end{align*}
Note that $T_L$ is injective if and only if $L$ is injective.

Let $S^{k-1}$ be the unit sphere of $\cV_1$.  Then for any $x \in G$ and $v \in S^{k-1}$, we can isometrically embed $\R$ into $G$ via the map
\begin{align*}
  t \mapsto x e^{tv}.
\end{align*}
The images of these lines are called {\it horizontal lines}.

Finally as we have identified $G$ with $\R^n$, we can speak of the Lebesgue measure.  From looking at the Jacobians of the BCH formula and the dilation automorphism, we get that $\cL^n$, the Lebesgue measure on $\R^n$, is left-invariant and satisfies the identity $|\delta_\lambda(E)| = \lambda^N |E|$ where
\begin{align}
  N = \sum_{i=1}^r i \cdot \dim(\cV_i) \label{e:homogeneous-dimension}
\end{align}
is the {\it homogeneous dimension} of $G$.  As $\Hd^N$ is also a left-invariant $N$-homogeneous measure, by uniqueness of the Haar measure, we have that $\Hd^N$ and $\cL^n$ are multiples of each other.  Here, when we write $|E|$ for a set $E$, we mean the Lebesgue measure of $E$.  The Hausdorff dimension of a Carnot group is exactly its homogeneous dimension.

As $|B(x,r)| = cr^N$ for some $c > 0$ depending only on $G$, we have by basic packing arguments that $G$ is metrically doubling, that is, there exists some $M > 0$ depending only on $G$ so that for each $x \in G$ and $r > 0$, there exists $\{x_i\}_{i=1}^m$ where $m \leq M$ so that
\begin{align*}
  B(x,r) \subseteq \bigcup_{i=1}^m B(x_i,r/2).
\end{align*}
The following theorem of Christ says that such a space contains a collection of partitions that behave like dyadic cubes.
\begin{theorem}[Theorem 11 of \cite{christ}] \label{t:cubes}
  There exists a collection of open subsets $\Delta := \{Q_\omega^k \subset G : k \in \Z, \omega \in I_k\}$ and $\tau > 1,\eta > 0, C < \infty$ such that
  \begin{enumerate}[(a)]
    \item $\left| G \backslash \bigcup_\omega Q_\omega^k \right| = 0, \qquad \forall k \in \Z$.
    \item If $k \geq j$ then either $Q_\alpha^j \subset Q_\omega^k$ or $Q_\alpha^j \cap Q_\omega^k = \emptyset$.
    \item For each $(j,\alpha)$ and each $k > j$ there exists a unique $\omega$ such that $Q_\alpha^j \subset Q_\omega^k$.
    \item For each $Q_\omega^k$ there exists some $z_{Q_\omega^k}$ so that
    \begin{align*}
      B(z_{Q_\omega^k}, \tau^{k-1}) \subseteq Q_\omega^k \subseteq B(z_{Q_\omega^k}, \tau^{k+1}).
    \end{align*}
    \item $|\{x \in Q_\alpha^k : d(x,G \backslash Q_\alpha^k) \leq t \delta^k\}| \leq C t^\eta |Q_\alpha^k|$. \label{e:small-boundaries}
  \end{enumerate}
\end{theorem}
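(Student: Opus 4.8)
Since Theorem~\ref{t:cubes} is quoted verbatim from \cite{christ}, I only indicate the shape of the argument; the only structural input is that $G$ is metrically doubling (established above) and that $\cL^n$ is a doubling measure, which is immediate from $|B(x,r)| = cr^N$. The plan is as follows. For each $k \in \Z$, using doubling, fix a maximal $\tau^k$-separated set $\{z^k_\omega : \omega \in I_k\} \subseteq G$; then the balls $B(z^k_\omega,\tau^k/3)$ are pairwise disjoint and the balls $B(z^k_\omega,\tau^k)$ cover $G$. These points will be the cube centers $z_{Q^k_\omega}$ of (d).

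Next I would impose the tree structure of (b)--(c). To each $\omega \in I_{k-1}$ assign a parent $p(\omega) \in I_k$ by taking $z^k_{p(\omega)}$ to be a nearest generation-$k$ center to $z^{k-1}_\omega$, with ties broken by a fixed enumeration of $\bigsqcup_k I_k$; composing $p$ then gives a well-defined generation-$j$ ancestor for every $\omega \in I_k$ and every $j > k$. One then \emph{defines} $Q^k_\omega$, roughly, as the union of the balls $B(z^j_\alpha,\tau^j)$ over all $\alpha$ of some generation $j \le k$ whose generation-$k$ ancestor is $\omega$; since there is no bottom scale this must be organized as a simultaneous/limiting construction, and one checks the resulting sets stabilize and can be taken open modulo a null set, which gives (a). If $\tau$ is chosen large enough that $d(z^{k-1}_\omega, z^k_{p(\omega)})$ is a small fraction of $\tau^k$, the geometric series $\sum_{j \le k}\tau^j$ is dominated by its top term, yielding the two-sided inclusion $B(z^k_\omega, c\tau^k) \subseteq Q^k_\omega \subseteq B(z^k_\omega, C\tau^k)$; after a harmless reindexing in $k$ this is exactly (d), and (b), (c) hold by construction.

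The one genuinely quantitative point — and the step I expect to be the main obstacle — is the small-boundary estimate, the last item of the theorem. Fixing $Q = Q^k_\alpha$, write $A_t$ for the set of $x \in Q$ with $\dist(x, G \setminus Q) \le t\tau^k$. The heart of the matter is an \emph{engulfing} phenomenon: there is $\theta \in (0,1)$, depending only on the doubling constant, such that passing from a cube to its children resolves a fixed proportion of the near-boundary mass — a ball of radius comparable to $\tau^{k-1}$ lying near $\partial Q$ is, one scale down, either swallowed into the interior of a single child (its points then leaving the boundary layer at the finer scale) or genuinely split by the boundary of a child, and in either case a definite fraction of its Lebesgue measure is accounted for. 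Covering the boundary layer at each scale $\tau^{k}, \tau^{k-1}, \dots$ by disjoint balls of that radius, comparing their total mass to $|Q|$ via the doubling of $\cL^n$, and iterating the engulfing step across scales, one finds that the set of $x \in Q$ with $\dist(x, G \setminus Q) \le \tau^m$ has measure at most $C\theta^{\,k-m}|Q|$ for each $m < k$; interpolating between consecutive scales upgrades this to $|A_t| \le Ct^\eta|Q|$ for all small $t$, with $\eta = \log(1/\theta)/\log\tau$. Everything before this is bookkeeping once the nets and the parent map are fixed; the engulfing argument is where the doubling property does real work, and making precise the claim that a fixed proportion is resolved at every finer scale — especially for boundary balls sitting near centers whose parent assignment distorts the local picture — is the delicate part, carried out in detail in \cite{christ}.
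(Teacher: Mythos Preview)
The paper does not give its own proof of Theorem~\ref{t:cubes}; it is simply quoted from \cite{christ} as a black box, so there is nothing to compare your argument against here. Your sketch is a fair outline of Christ's construction---maximal $\tau^k$-nets for the centers, a parent map to impose the tree structure (b)--(c), and the iterative ``engulfing'' mechanism for the small-boundary estimate (e)---and you correctly flag that last step as the only place where real work happens. One minor caution: in Christ's actual argument the small-boundary property is not proved \emph{after} the cubes are built but is baked into the construction itself via a random-label/probabilistic or order-of-assignment device that guarantees, with positive probability, that most points stay far from all boundaries simultaneously; your description of proving (e) by a post-hoc covering argument on an already-fixed $Q$ is a plausible heuristic but not quite how the proof in \cite{christ} is organized. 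Since the present paper treats the theorem as a citation, this level of detail is more than what is required.
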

We let $j : Q^k_\omega \mapsto k$ denote the scale of each cube and $\ell : Q \mapsto \tau^{j(Q)}$ be its approximate diameter.  We also let $\Delta_k := \{Q_\omega^k : \omega \in I_k\}$ and given a cube $S \in \Delta$, let $\Delta(S) = \{Q \in \Delta : Q \subseteq S\}$.

Given a map $f : G \to H$ between two Carnot groups, we can define the derivative of $f$ at $p \in G$ as the map
\begin{align*}
  Df_p(u) := \lim_{\lambda \to 0} \delta_{1/\lambda}( f(p)^{-1} f(p\delta_\lambda(u))).
\end{align*}
Pansu proved in \cite{pansu}*{Corollary 3.3 and Proposition 4.1} that for every Lipschitz $f : G \to H$ and almost every $p \in G$, $Df_p : G \to H$ exists and is a Lipschitz homomorphism.  This differentiability theorem was later generalized to Lipschitz functions defined only on measurable subsets of $G$ by Magnani in \cite{magnani}.  Using the Pansu derivative, Magnani was also able to prove in the same paper an area formula.  Specifically if $N$ is the homogeneous dimension of $G$ and we define the Jacobian of $Df_x$ as
\begin{align*}
  J_N(Df_x) := \frac{\Hd^N(Df_x(B(0,1)))}{\Hd^N(B(0,1))},
\end{align*}
then for any measurable $A \subseteq G$ and Lipschitz $f : A \to H$, we have
\begin{align}
  \int_A J_N(Df(x)) ~d\Hd^N(x) = \int_H \# f^{-1}(y) ~d\Hd^N(y). \label{e:area-formula}
\end{align}

\subsection{A nondiscretizable Carnot group}
We recall Definition 2.11 of \cite{meyerson} of discretizability of a Carnot group.  Let $G$ be a Carnot group whose Lie algebra $\g$ admits the stratification
\begin{align*}
  \g = \bigoplus_{j=1}^r \cV_j,
\end{align*}
and let $m_j = \dim \cV_j$.  We say $G$ is {\it discretizable} if for each $j \in \{1,...,r\}$ there exist a collection of vectors
\begin{align*}
  \{X_{(j,i)}\}_{i=1}^{m_j} \in \cV_j
\end{align*}
such that $\{X_{(j,i)}\}_{i=1}^{m_j}$ spans $\cV_j$ and if we let $G' = \langle \{\exp(X_{(1,i)})\}_{i=1}^{m_1} \rangle$ and $G_j = \exp\left( \bigoplus_{k=j}^r \cV_k \right)$, then $G'$ is a discrete subgroup and
\begin{align}
  G' \cap G_j = \langle \{\exp(X_{(k,i)}) \}_{1 \leq i \leq m_k, j \leq k \leq r} \rangle. \label{e:cocompact}
\end{align}
In other words, a group $G$ is discretizable if there exists a basis of horizontal elements that generate a discrete subgroup spanning all of $G$.  Recall that the biLipschitz decomposition result of \cite{meyerson} required that the domain Carnot group be discretizable.  We now prove that not all Carnot groups are discretizable.

\begin{proposition}
  There exists a Carnot group that is not discretizable.
\end{proposition}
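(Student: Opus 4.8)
The plan is to exhibit a concrete low-dimensional Carnot group and show directly that no horizontal basis can generate a discrete subgroup with the cocompactness condition~\eqref{e:cocompact}. The obvious first candidate is a free nilpotent group of step $2$ on a suitable number of generators, but those are in fact discretizable; the obstruction to discretizability comes from incommensurability among the structure constants, so I would instead look for a step-$2$ group whose bracket structure has an ``irrational'' coupling between the horizontal generators. Concretely, I would take $\g$ with horizontal layer $\cV_1$ spanned by $X_1, X_2, X_3, X_4$ and a two-dimensional second layer $\cV_2$ spanned by $Z_1, Z_2$, with bracket relations $[X_1,X_2] = Z_1$, $[X_3,X_4] = Z_2$, $[X_1,X_4] = Z_1 + \alpha Z_2$ for a well-chosen irrational $\alpha$, and all other brackets of horizontal generators vanishing (one must of course check the Jacobi identity holds, which in step $2$ is automatic). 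The intuition is that any horizontal basis $\{Y_{(1,i)}\}$ of $\cV_1$ expresses the $X_k$ as rational combinations only if the change of basis is rational, and the induced brackets $[Y_{(1,i)}, Y_{(1,j)}]$ must then land in the lattice generated by the $\exp(X_{(2,i)})$; forcing $\alpha$ into this picture produces a contradiction with discreteness.

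The key steps, in order, would be: (1) verify that the bracket data above defines a genuine stratified Lie algebra of step $2$ — the only thing to check is that $\cV_2 = [\cV_1,\cV_1]$, i.e.\ that the three prescribed brackets span a two-dimensional space, which holds as long as $\{Z_1, Z_2, Z_1 + \alpha Z_2\}$ contains two independent vectors, true for every $\alpha$; (2) set up the discretizability hypothesis: suppose horizontal vectors $X_{(1,1)}, \dots, X_{(1,4)}$ span $\cV_1$ and that $G' = \langle \exp(X_{(1,i)}) \rangle$ is discrete with $G' \cap G_2 = \langle \exp(X_{(2,1)}), \exp(X_{(2,2)}) \rangle$ for some $X_{(2,1)}, X_{(2,2)} \in \cV_2$; (3) use the BCH formula (which in step $2$ reads $\exp(U)\exp(V) = \exp(U + V + \tfrac12[U,V])$) to compute that the commutator $[\exp(X_{(1,i)}), \exp(X_{(1,j)})]$ in the group equals $\exp([X_{(1,i)}, X_{(1,j)}])$, so that $G' \cap \exp(\cV_2)$ must contain the subgroup generated by all the vectors $[X_{(1,i)}, X_{(1,j)}] \in \cV_2$; (4) write the $2 \times 2$ Gram-type matrix expressing these brackets in terms of the change-of-basis matrix $A \in GL_4(\R)$ carrying the $X_k$ to the $X_{(1,i)}$, obtaining that the lattice $\Lambda$ generated by $\{[X_{(1,i)}, X_{(1,j)}]\}_{i<j}$ in $\cV_2 \cong \R^2$ is spanned by $2$-minors of $A$ paired against the fixed skew form encoding the brackets; (5) derive a number-theoretic obstruction: for $G'$ to be discrete, the rank-$2$ lattice $\Lambda$ must be discrete, but the entries of its generating vectors are $\Z[\alpha]$-combinations (with coefficients that are integer polynomials in the entries of $A$, once one also imposes that $\exp(X_{(1,i)}) \exp(X_{(1,j)})^{-1} \cdots$ closes up), and for $\alpha$ irrational this forces either a degenerate $A$ or a non-discrete $\Lambda$ — contradiction.

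The main obstacle I anticipate is step~(4)–(5): making precise exactly which lattice must be discrete and extracting a clean contradiction. The subtlety is that discretizability does \emph{not} demand that the $X_{(1,i)}$ themselves be integer combinations of the $X_k$ — only that the \emph{group} they generate be discrete — so one cannot simply argue ``$A$ must be rational.'' Instead one must track the subgroup of $\exp(\cV_2) \cong \R^2$ generated by \emph{all} products of the $\exp(X_{(1,i)}^{\pm 1})$ that happen to land in $\exp(\cV_2)$; by BCH in step $2$ this subgroup is precisely the $\Z$-span of the $\binom{4}{2}=6$ brackets $[X_{(1,i)},X_{(1,j)}]$, and its being a lattice (rank $\le 2$, discrete) in $\R^2$ is the real constraint. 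The cleanest route is probably to choose the bracket structure so that the $\Z$-span of $\{[X_{(1,i)}, X_{(1,j)}]\}$, as $A$ ranges over $GL_4(\R)$, \emph{always} contains two $\R$-linearly independent vectors whose ratio of second-to-first coordinates takes at least two values differing by an irrational multiple, contradicting discreteness of a rank-$2$ subgroup of $\R^2$. I would engineer $\alpha$ transcendental (say $\alpha = \pi$ or $e$) to kill any algebraic coincidences among the $2$-minors of rational candidate matrices $A$, which reduces the problem to a finite, explicit linear-algebra verification. Alternatively, a slicker packaging: show that discretizability would force $G$ to admit a lattice (cocompact discrete subgroup), then invoke Mal'cev's criterion that a nilpotent Lie group admits a lattice iff its Lie algebra has rational structure constants in some basis — and observe that the group above, by a determinant/trace-form argument on the bracket map $\Lambda^2 \cV_1 \to \cV_2$, has an intrinsic invariant (the cross-ratio of the three bracket lines in $\mathbb{P}(\cV_2)$) that is irrational in every basis, hence no rationalizing basis exists.
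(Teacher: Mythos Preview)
Your proposed example is actually discretizable, so the argument cannot close. Concretely, with $[X_1,X_2]=Z_1$, $[X_3,X_4]=Z_2$, $[X_1,X_4]=Z_1+\alpha Z_2$ and all other horizontal brackets zero, set $Y_1=X_1$, $Y_2=X_2$, $Y_3=\alpha X_3$, $Y_4=X_4-X_2$, $W_1=Z_1$, $W_2=\alpha Z_2$. A direct check gives $[Y_1,Y_2]=W_1$, $[Y_1,Y_4]=W_2$, $[Y_3,Y_4]=W_2$, and all other $[Y_i,Y_j]=0$; every structure constant is $0$ or $1$. In this basis the subgroup $G'=\langle \exp(Y_1),\ldots,\exp(Y_4)\rangle$ is contained in $\exp(\Z Y_1+\cdots+\Z Y_4+\tfrac12\Z W_1+\tfrac12\Z W_2)$, hence discrete, and one computes $G'\cap G_2=\langle \exp(W_1),\exp(W_2)\rangle$, so \eqref{e:cocompact} holds. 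Thus both of your routes collapse for this $\g$: the direct lattice-in-$\cV_2$ argument finds no obstruction, and the Mal'cev route fails because the algebra \emph{does} have a rational basis. Your proposed ``cross-ratio of the three bracket lines in $\mathbb{P}(\cV_2)$'' is not an invariant either: three points in $\mathbb{P}^1$ carry no projective invariant, and in any case a change of horizontal basis mixes all six brackets, not just the three nonzero ones, so the ``three lines'' are not intrinsically defined.

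The underlying issue is that in step $2$ with $\dim\cV_1=4$, $\dim\cV_2=2$, the group $GL_4(\R)\times GL_2(\R)$ acting on bracket maps $\Lambda^2\R^4\to\R^2$ has a great deal of room to rationalize; your three nonzero brackets do not pin down a genuine continuous modulus. The paper avoids this by going to step $6$ with a two-dimensional horizontal layer: there the horizontal basis is governed by a single matrix $\bigl(\begin{smallmatrix}a&b\\c&d\end{smallmatrix}\bigr)\in GL_2(\R)$, and discreteness at each of four successive layers forces the M\"obius map $t\mapsto (at+b)/(ct+d)$ to send four prescribed real parameters $t_3,\ldots,t_6$ into $\Q$. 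Choosing three of the $t_i$ rational forces $a,b,c,d\in\Q$, whereupon a fourth irrational $t_i$ yields a contradiction. If you want to salvage a low-step example, you would need to exhibit a step-$2$ algebra with a genuine \emph{irrational} modulus under the full graded automorphism group and then argue that discretizability forces that modulus to be rational; your current bracket data does not have one.
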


\begin{proof}
  We let $G$ be the Carnot group that has the stratified Lie algebra $\g$ of step 6 that we now describe.  The horizontal layer $\cV_1$ is two dimensional and spanned by two vectors $X$ and $Y$.  The other layers are 1-dimensional, and we let $Z_i$ be vectors spanning $\cV_i$ for $i \in \{2,...,6\}$.  We define the relations $[X,Y] = Z_2$,
  \begin{align*}
    [X,Z_i] = t_{i+1} Z_{i+1} \qquad \text{and} \qquad [Y,Z_i] = Z_{i+1}, \qquad i \in \{2,...,5\}.
  \end{align*}
  Here, $t_3,...,t_6$ are real numbers that we choose later.
  
  Suppose $G$ is discretizable, and let $G'$ be the discrete subgroup as in the definition.  Then $G'$ is generated by $g = \exp(aX + bY)$ and $h = \exp(cX + dY)$ for $a,b,c,d \in \R$, and we may suppose that $ad - bc = 1$.  By assumption, there exists some $s_i \neq 0$ such that $u_i = \exp(s_i Z_i)$ for $i \in \{2,...,5\}$ are elements of $G'$.  We then have by the BCH formula that
  \begin{align*}
    gu_ig^{-1}u_i^{-1} &= \exp((at_{i+1}+b)s_i Z_{i+1} + Y_i) \in G' \cap G_{i+1}, \\
    hu_ih^{-1}u_i^{-1} &= \exp((ct_{i+1}+d)s_i Z_{i+1} + Y_i') \in G' \cap G_{i+1},
  \end{align*}
  where $Y_i,Y_i' \in \bigoplus_{k=i+2}^6 \cV_k$.
  
  In order for $G'$ to be discrete, we must have that $\frac{at_i+b}{ct_i+d} \in \Q$ for all $i \in \{3,...,6\}$.  Indeed, let us suppose $G'$ is discrete.  Let
  \begin{align*}
    v &= gu_5g^{-1}u_5^{-1} = \exp((at_6+b) s_5 Z_6), \\
    w &= hu_5h^{-1}u_5^{-1} = \exp((ct_6+d) s_5 Z_6),
  \end{align*}
  which are elements of $G'$.  Thus, we have that
  \begin{align*}
    \{ v^p w^q : p,q \in \Z \} = \{ \exp( (p (at_6+b) + q (ct_6 + d)) s_5 Z_6 ) : p,q \in \Z\} \subset G'
  \end{align*}
  If $\frac{at_6 + b}{ct_6 + d} \notin \Q$, then we get that $\{p (at_6 + b) + q (ct_6 + b) : p,q \in \Z\}$ is dense in $\R$ and then $\{v^pw^q : p,q \in \Z\}$ is dense in $\exp(\cV_6)$, which contradicts the discreteness of $G'$.  We can repeat the same argument to show then that
  \begin{align*}
    \{ \exp( (p (at_i+b) + q (ct_i + d)) s_{i-1} Z_i + W_{p,q} ) : p,q \in \Z\} \subset G'
  \end{align*}
  for all $i$ where $W_{p,q} \in \bigoplus_{k=i+1}^6 \cV_k$ lies in a bounded region.  Here, one uses \eqref{e:cocompact} to show that such a $W_{p,q}$ can be chosen.  Then, by the same argument, we get that $\frac{at_i + b}{ct_i +d} \in \Q$ for all $i$.
  
  Note that if the map
  \begin{align*}
    \phi : x \mapsto \frac{ax + b}{cx + d}
  \end{align*}
  takes three distinct rationals to rationals and $ad - bc = 1$, then $a,b,c,d$ are all rational and so $\phi$ takes all rationals to rationals (and possibly infinity) and irrationals to irrationals.  Thus, if we specify $t_3,...,t_6$ to be three distinct rational numbers and an irrational one, then $G'$ cannot be discrete for any such choice of $a,b,c,d$ and so $G$ is not discretizable.
\end{proof}

\section{Distortion and nets}
From here on, we let $G$ and $H$ be two Carnot groups with Lie algebras $\g$ and $\h$ of step $r$ and $s$, respectively.  Let $L : G \to H$ be a homomorphism.  As mentioned before, one can lift this homomorphism via the exponential map to a linear transform $T_L : \g \to \h$.

Our first lemma says that a homomorphism that collapses points does so on the layers.
\begin{lemma} \label{l:inradius}
  Let $G$ and $H$ be as above and $L : G \to H$ be a homomorphism such that there exists $g \in G$ so that $N_\infty(L(g)) < \epsilon N_\infty(g)$ for some $\epsilon > 0$.  Then there exists $j \in \{1,...,r\}$ and some $v \in \cV_j(\g)$ for which $|v| = 1$ and $N_\infty(L(e^v)) < \epsilon$.
\end{lemma}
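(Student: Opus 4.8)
The plan is to exploit the scaling behavior of $N_\infty$ together with the fact that the hypothesis gives us a single point whose image is compressed by a factor $\epsilon$. First I would normalize: writing $g = \exp(g_1 + \dots + g_r)$, the condition $N_\infty(L(g)) < \epsilon N_\infty(g)$ says that $\max_k |T_L(g_k)|^{1/k} < \epsilon \max_k |g_k|^{1/k}$. By applying a dilation $\delta_\lambda$ (which is an automorphism and for which $N_\infty \circ \delta_\lambda = \lambda N_\infty$ on both $G$ and $H$, and which commutes with $L$ since $L$ is a homomorphism respecting the stratification), I may rescale so that $N_\infty(g) = 1$; thus $|g_k| \le 1$ for all $k$, with equality for at least one $k$, and $|T_L(g_k)|^{1/k} < \epsilon$ for all $k$.

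Next I would try to isolate a single layer. The naive hope is that the layer $j$ achieving $|g_j| = 1$ already works: set $v = g_j / |g_j| = g_j \in \cV_j(\g)$; then $|v| = 1$ and, using $T_L$ linear and $T_L(\cV_j(\g)) \subseteq \cV_j(\h)$ from \eqref{e:layer-inclusion}, we have $N_\infty(L(e^v)) = |T_L(v)|^{1/j} = |T_L(g_j)|^{1/j} < \epsilon$, which is exactly the conclusion. So the substance of the lemma is really just bookkeeping: picking out the dominant layer of $g$ after rescaling, and observing that $T_L$ acts layer-wise so that the $\cV_j$-component of $L(e^v)$ is precisely $T_L(v)$ with no contribution from other strata (here it matters that $e^v = \exp(v)$ has all other layer-components zero, so there are no BCH correction terms, and the image $L(e^v) = \exp(T_L(v))$ likewise lives in the single layer $\cV_j(\h)$).

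The one point requiring a little care — and the step I expect to be the only real obstacle, though a mild one — is handling the rescaling when $N_\infty(g)$ is achieved at a layer $k$ that is not the layer where $|T_L(g_k)|$ is largest; one must be sure that after dilating to make $N_\infty(g) = 1$, the strict inequality $N_\infty(L(g)) < \epsilon N_\infty(g)$ is preserved (it is, since both sides scale by the same power of $\lambda$) and that it forces $|T_L(g_k)|^{1/k} < \epsilon$ for \emph{every} $k$ including the dominant one, which then yields $v = g_k$ for the dominant $k$. An alternative, perhaps cleaner, route avoiding dilations entirely: directly let $j$ be an index maximizing $|g_j|^{1/j}$, so $N_\infty(g) = |g_j|^{1/j}$, put $v = g_j/|g_j| \in \cV_j(\g)$, and compute $N_\infty(L(e^v)) = |T_L(g_j)|^{1/j}/|g_j|^{1/j} = |T_L(g_j)|^{1/j}/N_\infty(g) \le N_\infty(L(g))/N_\infty(g) < \epsilon$, where the middle inequality uses $|T_L(g_j)|^{1/j} \le \max_k |T_L(g_k)|^{1/k} = N_\infty(L(g))$. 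This is the argument I would write up.
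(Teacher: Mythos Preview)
Your argument is correct and is essentially the same as the paper's: both hinge on picking the index $j$ with $N_\infty(g)=|g_j|^{1/j}$ and using $N_\infty(L(g))\ge |T_L(g_j)|^{1/j}$ together with the layer-preserving property of $T_L$. The only difference is cosmetic---the paper phrases it as a contrapositive (assume the conclusion fails for every layer and deduce $N_\infty(L(g))\ge \epsilon N_\infty(g)$ for all $g$), whereas you extract $v=g_j/|g_j|$ directly; your ``cleaner route'' is the contrapositive unwound.
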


\begin{proof}
  Suppose the conclusion is false.  Then using the definition of $N_\infty$ and the fact that $L = \exp \circ T_L \circ \exp^{-1}$, we have
  \begin{align}
    |T_L(u)|^{1/i} \geq \epsilon |u|^{1/i}, \qquad \forall u \in \cV_i(\g), \forall i \in \{1,...,r\}. \label{e:short-layer-vectors}
  \end{align}
  Let $g = \exp(g_1 + ... + g_r) \in G$.  Then there exists some $k \in \{1,...,r\}$ so that $|g_k|^{1/k} = N_\infty(g)$.  We have that $L(\exp(g_1 + ... + g_r)) = \exp(T_L(g_1) + ... + T_L(g_r))$ and by the definition of $N_\infty$ of $H$, we have that
  \begin{align*}
    N_\infty(\exp(T_L(g_1) + ... + T_L(g_r))) = \max_i |T_L(g_i)|^{1/i} \geq |T_L(g_k)|^{1/k} \overset{\eqref{e:short-layer-vectors}}{\geq} \epsilon |g_k|^{1/k} = \epsilon N_\infty(g).
  \end{align*}
  As $g$ was arbitrary, this contradicts our assumption.
\end{proof}

Assume that $\dim(\g) > \dim(\h)$ and let $f : B(x,2R) \to H$ be Lipschitz.  We then get that any Pansu-derivative $Df_p$ cannot be injective as $T_{Df_p}$ cannot be.  Thus, by \eqref{e:layer-inclusion} and Lemma \ref{l:inradius}, there must exist some $\cV_j(\g)$ such that
\begin{align*}
  \dim(T_{Df_p}(\cV_j(\g))) < \dim(\cV_j(\g)).
\end{align*}
By the definition of homogeneous dimension \eqref{e:homogeneous-dimension}, we get then that $Df_p(G)$ has homogeneous/Hausdorff dimension less than $N$ and so $J_N(Df_p) = 0$.  As this holds for all $p \in G$ in a set of full measure where the Pansu-derivative exists, we get by the area formula that
\begin{align*}
  \Hd^N_\infty(f(B(x,2R))) \leq \Hd^N(f(B(x,2R))) \overset{\eqref{e:area-formula}}{=} 0.
\end{align*}
There is then nothing to prove for Theorem \ref{t:main}.  Thus, from now on, we can and will assume that $n = \dim(\g) \leq \dim(\h) = m$.

Our next lemma says that right translation does not distort coordinates too much.
\begin{lemma} \label{l:box-distort}
  There exists some $C_2 > 0$ depending only on $H$ so that if $g,h \in H$ are such that $N_\infty(g) \leq 1$ and $N_\infty(h) \leq \epsilon$ for some $\epsilon > 0$, then $|(g \cdot h)_i - g_i| \leq C_2\epsilon$ for all $i \in \{1,...,s\}$.
\end{lemma}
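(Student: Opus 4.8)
The plan is to expand $g\cdot h$ in exponential coordinates via the Baker--Campbell--Hausdorff formula and estimate the resulting correction term layer by layer. As recorded in the preliminaries, $g\cdot h = \exp\bigl(g_1+h_1+g_2+h_2+P_2+\dots+g_r+h_r+P_r\bigr)$ with each $P_k$ a polynomial in $g_1,\dots,g_{k-1},h_1,\dots,h_{k-1}$, so
\begin{align*}
  (g\cdot h)_i - g_i \;=\; h_i + P_i(g_1,\dots,g_{i-1},h_1,\dots,h_{i-1}).
\end{align*}
From $N_\infty(g)\le1$ we have $|g_j|\le1$ for every $j$, and from $N_\infty(h)\le\epsilon$ we have $|h_k|\le\epsilon^k$ for every $k$, whence $|h_i|\le\epsilon^i$. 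Since only small $\epsilon$ are relevant we may as well take $\epsilon\le1$, in which case $\epsilon^k\le\epsilon$ for $k\ge1$; the whole lemma then reduces to the bound $|P_i|\le C_H\epsilon$ for a constant $C_H$ depending only on $H$.

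To get this I would exploit two structural features of the BCH polynomials $P_i$. First, $P_i$ is a finite sum of iterated Lie brackets in the arguments $g_1,\dots,g_{i-1},h_1,\dots,h_{i-1}$ which is graded-homogeneous of weight $i$, where each of $g_j$ and $h_j$ is assigned weight $j$: applying $\delta_\lambda$ to the displayed identity and using $\delta_\lambda(g)\cdot\delta_\lambda(h)=\delta_\lambda(g\cdot h)$ together with the fact that the $i$-th layer component of $\delta_\lambda$ is multiplication by $\lambda^i$ forces $P_i$ to scale by $\lambda^i$. Second, every monomial occurring in $P_i$ involves at least one factor $g_j$ and at least one factor $h_k$, since $g\cdot 0=g$ and $0\cdot h=h$ force $P_i$ to vanish when all $h_k=0$ and when all $g_j=0$, ruling out pure-$g$ and pure-$h$ monomials. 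Hence a typical monomial in $P_i$ is an iterated bracket of at most $s$ terms whose $g$-factors have weights summing to some $a\ge1$ and whose $h$-factors have weights summing to $b=i-a\in\{1,\dots,i-1\}$; using the elementary estimate that an $m$-fold iterated bracket in $\h$ has norm at most $K_H^{\,m-1}$ times the product of the norms of its entries (valid for $m\le s$, with $K_H$ depending only on $H$), such a monomial has norm at most $C_H\prod|g_j|\cdot\prod|h_k|\le C_H\,\epsilon^{b}\le C_H\,\epsilon$, where the products run over the factors of that monomial.

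Summing the finitely many monomials gives $|P_i|\le C_H'\epsilon$, and therefore $|(g\cdot h)_i-g_i|\le\epsilon^i+C_H'\epsilon\le C_2\epsilon$ with $C_2$ depending only on $H$. I expect the only point requiring genuine care to be the description of $P_i$ in the second paragraph --- its weight-$i$ homogeneity and the absence of pure-$g$ and pure-$h$ monomials; both are standard consequences of the BCH formula, and in a pinch one can simply truncate the BCH series at order $s$ and inspect the resulting brackets by hand, at the cost of a bulkier argument.
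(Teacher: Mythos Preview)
Your proof is correct and follows essentially the same approach as the paper: expand via BCH, reduce to bounding $|P_i|$, and observe that every iterated bracket appearing in $P_i$ contains at least one $h$-component, so its norm is at most $\epsilon$ times a constant. The paper extracts this last fact directly from the shape of the BCH series (the innermost entry of each bracket is an $h$-coordinate), whereas you deduce it from $P_i|_{h=0}=0$; your additional weight-homogeneity observation is correct but not actually needed for the bound.
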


\begin{proof}
  This follows from the BCH formula.  Fix some $i \in \{1,...,s\}$.  Then
  \begin{align*}
    |(g \cdot h)_i - g_i| = |h_i + P_i(g_1,...,g_{i-1},h_1,...,h_{i-1})|.
  \end{align*}
  As $|h_i| \leq \epsilon^i \leq \epsilon$, it suffices to show that $|P_i| \leq C\epsilon$ for some $C > 0$.  As $P_i$ is a polynomial of nested Lie brackets where the number of brackets and the coefficients are bounded by some number depending only on $i$, it further suffices to bound each nested Lie bracket by $C\epsilon$.  Let $[x_1,[x_2,...,[x_{\ell-1},x_\ell]...]]$ be one such term.  By the BCH formula, one of $x_{\ell-1}$ and $x_\ell$ is a coordinate of $h$.  Thus, as $|g_j| \leq 1$ and $|h_j| \leq \epsilon^j \leq \epsilon$ for all $j \geq 1$, we have that
  \begin{align*}
    |[x_1,[x_2,...,[x_{\ell-1},x_\ell]...]]| \leq \prod_{j=1}^\ell |x_j| \leq \epsilon.
  \end{align*}
  This finishes the proof.
\end{proof}

We can now prove the main result of this section which says that if a homomorphism collapses points, then we can cover the homomorphic image of a ball by only a few small balls.  In the proof (and statement), balls $B(x,r)$ will be balls in the $d_\infty$ metric of their respective Carnot groups.  We let cubes of the form $[a,b]^k$ denote the exponential images in $H$ of these sets in $\h$ which we have identified with a Euclidean space.  We also let $B_{\R^n}(0,s)$ denote the exponential images in $H$ of the corresponding Euclidean balls in $\h$.

\begin{lemma} \label{l:nets-lemma}
  There exists some $C_3 > 0$ depending only on $G$ and $H$ so that if $\epsilon > 0$ and $L : (G,d_\infty) \to (H,d_\infty)$ is a 1-Lipschitz homomorphism such that there exists $g \in G$ so that $N_\infty(L(g)) < \epsilon N_\infty(g)$, then for every $x \in G$ and $\ell \geq 0$, there exist points $\{x_i\}_{i=1}^{N_\epsilon} \subset L(B(x,\ell))$ with $N_\epsilon \leq C_3 \epsilon^{1-N}$ so that
  \begin{align*}
    L(B(x,\ell)) \subset \bigcup_{i=1}^{N_\epsilon} B(x_i,\epsilon \ell).
  \end{align*}
  Here, $N$ is the Hausdorff/homogeneous dimension of $G$.
\end{lemma}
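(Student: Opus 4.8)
The plan is to exploit Lemma~\ref{l:inradius} to find a distinguished layer $\cV_j(\g)$ on which $L$ is very contracted, use this to cover $L(B(x,\ell))$ by thin ``slabs,'' and then count how many $\epsilon\ell$-balls are needed to cover one slab. By rescaling via the dilation $\delta_{1/\ell}$ (which is an isometry-rescaling that intertwines with $L$ since $L$ is a homomorphism commuting with dilations) and left-translating, it suffices to treat the case $x = 0$, $\ell = 1$. So we must cover $L(B(0,1))$ by at most $C_3\epsilon^{1-N}$ balls of radius $\epsilon$.

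First I would apply Lemma~\ref{l:inradius} to get $j \in \{1,\dots,r\}$ and a unit vector $v \in \cV_j(\g)$ with $N_\infty(L(e^v)) < \epsilon$, i.e. $|T_L(v)|^{1/j} < \epsilon$, so $|T_L(v)| < \epsilon^j$. Next I would cover $B(0,1) \subset G$ by a controlled number of translates, along the $v$-direction, of a ``thin box'': writing a generic point of $B(0,1)$ in exponential coordinates, one can first move along $e^{tv}$ for $|t| \lesssim 1$ and land in a set where the $\cV_j$-coordinate in the $v$-direction is confined to an interval of length $\sim \epsilon$. Concretely, partition the relevant coordinate interval $[-C,C]$ into $\sim \epsilon^{-1}$ subintervals of length $\epsilon$; this expresses $B(0,1)$ as a union of $\sim \epsilon^{-1}$ pieces, each of the form $p_k \cdot (e^{tv} S_k)$ with $|t|\le \epsilon$ and $S_k$ a fixed bounded set in which the $v$-component of the $\cV_j$-coordinate vanishes. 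The image $L(p_k \cdot e^{tv} S_k) = L(p_k)\cdot e^{t\,T_L(v)}\cdot L(S_k)$, and since $|T_L(v)|<\epsilon^j \le \epsilon$ and $|t|\le\epsilon$, the factor $e^{t\,T_L(v)}$ has $N_\infty < \epsilon^{1/j}\cdot\epsilon^{1/j}$—wait, more carefully, $N_\infty(e^{t T_L(v)}) = |t T_L(v)|^{1/j} \le (\epsilon\cdot\epsilon^j)^{1/j} = \epsilon^{1/j+1} \le \epsilon$. Using left-invariance and Lemma~\ref{l:box-distort}, $L(p_k \cdot e^{tv} S_k)$ lies within bounded $d_\infty$-distance of the fixed compact set $L(S_k)$, but crucially it is a subset of a translate of $L(S_k)$; and $L(S_k)$ itself, being the homomorphic image under a $1$-Lipschitz map of a bounded set contained in a codimension-$\geq 1$ ``hyperplane direction'' inside a Carnot group, is covered by $\lesssim \epsilon^{-(N-j)}$—better, by $\lesssim \epsilon^{-(N-1)}$—balls of radius $\epsilon$. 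Summing over the $\sim \epsilon^{-1}$ pieces would give too many; so instead the slabbing must be done so that each slab, as a subset of $H$, genuinely sits in an $\epsilon$-neighborhood of a set of homogeneous dimension $N-1$. The cleanest route: since $|T_L(v)| < \epsilon^j$, the image $L(B(0,1))$ is contained in the $\epsilon$-neighborhood (in $d_\infty$) of $L(\Pi)$ where $\Pi = \exp\big(\bigoplus_i \cV_i(\g) \ominus \mathbb{R}v\big)\cap B(0,C)$ is a ``slice'' transverse to $v$—this uses that translating any point of $B(0,1)$ by an appropriate $e^{-tv}$ with $|t|\lesssim 1$ lands it in $\Pi$, and that $L(e^{-tv})$ has $N_\infty \le |T_L(v)|^{1/j}\cdot C < \epsilon$ after absorbing constants, hence moves $L$-images by at most $O(\epsilon)$ by Lemma~\ref{l:box-distort}. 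Now $\Pi$ has homogeneous dimension at most $N - 1$ (we removed one dimension from some layer $\cV_j$, which contributes $j \ge 1$ to the homogeneous dimension), so by the doubling/packing property of $G$ together with the fact that $L$ is $1$-Lipschitz, $L(\Pi)$ is covered by $\lesssim \epsilon^{-(N-1)}$ balls of radius $\epsilon/2$ in $H$; doubling each to radius $\epsilon$ absorbs the $O(\epsilon)$-neighborhood, and recentering the balls at points of $L(B(0,1))$ (discarding empty ones) gives the claim with $N_\epsilon \lesssim \epsilon^{1-N}$.

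I would organize it as: (i) reduce to $x=0,\ell=1$ by dilation-invariance and left-invariance; (ii) invoke Lemma~\ref{l:inradius} to extract $v$ and the estimate $|T_L(v)|<\epsilon^j$; (iii) show every point of $B(0,1)$ can be pushed, by an $e^{-tv}$-translation with $|t|$ bounded by an absolute constant, into the transverse slice $\Pi\subset B(0,C)$, and that the corresponding $L$-image moves by at most $O(\epsilon)$ (Lemma~\ref{l:box-distort}); (iv) bound the covering number of $\Pi$ by $\lesssim \epsilon^{-(N-1)}$ using that $\mathrm{dim}_{\mathrm{hom}}\Pi \le N-1$ and $G$ is doubling; (v) push forward via the $1$-Lipschitz $L$, enlarge radii, recenter, and count.

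The main obstacle is step (iii)–(iv): one must verify carefully that removing a single Euclidean dimension from layer $\cV_j$ genuinely drops the homogeneous dimension by at least $1$ and that the resulting slice $\Pi$ is still a nice enough set (a bounded piece of a homogeneous ``subspace,'' or at least a set whose $\epsilon$-covering number in $d_\infty$ is $\lesssim \epsilon^{-(N-1)}$). The subtlety is that $\Pi$ need not be a subgroup, so one cannot directly cite a Carnot-group volume formula; instead one argues that $\Pi$, as a subset of $\R^n \cong \g$ of Euclidean codimension $1$ lying in a bounded region, has $d_\infty$-covering number at most $C\epsilon^{-(N-1)}$ by a direct packing estimate comparing $d_\infty$-balls (which are Euclidean boxes of sidelengths $\epsilon, \epsilon^2,\dots,\epsilon^r$ along the respective layers) against $\Pi$. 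Getting the bookkeeping on the exponents right—so that the removed dimension in layer $j$ saves a factor $\epsilon^{-j}$ in volume but we only need to save $\epsilon^{-1}$ in the covering count—is where the care is needed, but it works in our favor since $j\ge 1$.
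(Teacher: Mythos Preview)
Your approach is correct and genuinely different from the paper's. Both proofs begin with the same reduction to $x=0$, $\ell=1$ and invoke Lemma~\ref{l:inradius}, but then diverge. The paper works entirely in the \emph{target}: it identifies $L(G)$ with a linear subspace $\R^{n'}\subseteq\R^m\cong\h$, observes that $L(B(0,1))$ is a symmetric convex body in $\R^{n'}$ with Euclidean inradius $<\epsilon$ (this is where Lemma~\ref{l:inradius} enters), takes a maximal $\epsilon$-net in $L(B(0,1))$, and uses Lemma~\ref{l:box-distort} to trap the disjoint $d_\infty$-balls of radius $\epsilon/4$ about the net points inside a Euclidean $C_2\epsilon$-thickening of $L(B(0,1))$; since a bounded symmetric convex set of inradius $\lesssim\epsilon$ has $\cL^{n'}$-measure $\lesssim\epsilon$, a volume count gives $N_\epsilon\lesssim\epsilon^{1-N}$. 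Your argument instead works in the \emph{source}: you use the collapsed direction $v\in\cV_j(\g)$ to slide every $g\in B(0,1)$ along $e^{-tv}$ (with $|t|\le 1$) onto the transverse slice $\Pi=\{\langle g_j,v\rangle=0\}\cap B(0,C)$, note that $L$ moves the image by at most $N_\infty(e^{tT_L(v)})<\epsilon$, and then bound the $d_\infty$-covering number of $\Pi$ by $\lesssim\epsilon^{1-N}$ via the packing estimate that $\Pi\cdot B(0,\epsilon)$ lies in a Euclidean slab $\{|\langle w_j,v\rangle|\lesssim\epsilon\}$ of Lebesgue measure $\lesssim\epsilon$.

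What each buys: the paper's convex-body argument is clean once one accepts working inside the image subgroup, and it handles the injective and non-injective cases by a single dimension-drop; your slicing argument avoids ever leaving $G$, sidesteps the convexity of $L(B(0,1))$, and makes the role of the collapsed direction more transparent. Your acknowledged ``main obstacle''---that $\Pi$ is not a subgroup so one needs a direct packing estimate rather than a homogeneous-dimension citation---is real but, as you note, is resolved by exactly the slab-volume computation you describe.
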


\begin{proof}
  By homogeneity and left-invariance of the metric, we may suppose that $\ell = 1$ and $x = 0$.  Note that $L(G)$ is a Lie subgroup of $H$ with Lie algebra $T_L(\g) \subseteq \h$.  First suppose $L$ is injective.  As we are identifying $H$ with its Lie algebra $\h$, which we can also view as $\R^m$, $L(G)$ can also be identified with a linear subspace $\R^n \subseteq \R^m$.  Note then that $L(B(0,1))$ is a symmetric convex subset of $\R^n$.  From Lemma \ref{l:inradius}, we have that the inradius of $L(B(0,1))$ is less than $\epsilon$.  As $L$ is 1-Lipschitz, we also have that $L(B(0,1))$ is contained in $[-1,1]^n$.
  
  One can see from the Jacobian of the dilation that $\cL^n(B(0,s) \cap L(G)) = s^N \cL^n(B(0,1) \cap L(G))$.  In addition, as $L(G)$ is a Lie subgroup of $H$, we have by the BCH formulas that left translation by an element of $L(G)$ preserves the volume form of $\R^n$.  Thus, $\cL^n$ is a left invariant measure on $L(G)$ that is $N$-homogeneous with respect to dilations.
  
  Note that the $d_\infty$ metric of $H$ satisfies $\left[ -m^{-1/2}, m^{-1/2} \right]^m \subseteq B(0,1)$.  By simple Euclidean geometry, we have that
  \begin{align*}
    \left[-m^{-1/2},m^{-1/2} \right]^n \subseteq \left[ -m^{-1/2}, m^{-1/2} \right]^m \cap \R^n
  \end{align*}
  where the cube on the left hand side is written using the coordinates of $\R^n$ (with the induced metric from $\R^m$).  We also have that $B(0,1) \subset [-1,1]^m$.  Again, by Euclidean geometry, we have
  \begin{align*}
    \left[-m^{1/2},m^{1/2} \right]^n \supseteq [-1,1]^m \cap \R^n.
  \end{align*}
  Altogether, we have that
  \begin{align*}
    \left[ - m^{-1/2},m^{-1/2} \right]^n \subseteq B(0,1) \cap L(G) \subseteq \left[ -m^{1/2}, m^{1/2} \right]^n.
  \end{align*}
  Thus, we see that there exists some $c \in [2^nm^{-n/2},2^nm^{n/2}]$ so that $\cL^n(B(0,1) \cap L(G)) = c$.  It then follows from the fact that $\cL^n$ is a left-invariant $N$-homogeneous measure on $L(G)$ that if $x \in L(G)$, then
  \begin{align}
    \cL^n(B(x,r) \cap L(G)) = cr^N. \label{e:plane-ball-volume}
  \end{align}
  
  Take $\{x_i\}_{i=1}^{N_\epsilon}$ to be a maximal $\epsilon$-separated net in $L(B(0,1))$.  We claim that $N_\epsilon \leq C_3 \epsilon^{1-N}$ for some $C_3 > 0$.  The sets $\{B(x_i,\epsilon/4)\}_{i=1}^{N_\epsilon}$ are disjoint in $L(B(0,1))$.  Indeed, if there exists some intersection $z \in B(x_i,\epsilon/4) \cap B(x_j,\epsilon/4)$, then
  \begin{align*}
    d_\infty(x_i,x_j) \leq d_\infty(x_i,z) + d_\infty(z,x_j) \leq \frac{\epsilon}{2},
  \end{align*}
  contradicting our assumption that $\{x_i\}$ were $\epsilon$-separated.  Lemma \ref{l:box-distort} gives that $S = \bigcup_{i=1}^{N_\epsilon} (B(x_i,\epsilon/4) \cap L(G))$ is contained in an $C_2 \epsilon$-neighborhood (with respect to the Euclidean metric of $\R^n$) of $L(B(0,1))$.  Thus,
  \begin{align*}
    S \subset L(B(0,1)) + B_{\R^n}(0,C_2\epsilon).
  \end{align*}
  As $L(B(0,1))$ is a symmetric convex set, $L(B(0,1)) + B_{\R^n}(0,C_2 \epsilon)$ is also a symmetric convex set which has inradius at most $(C_2 + 1)\epsilon$ and is contained in $[-1,1]^n + B_{\R^n}(0,C_2\epsilon) \subseteq [-2,2]^n$ (assuming $\epsilon < 1/C_2$, which we can).  Then it follows from Euclidean geometry that
  \begin{align*}
    \cL^n(L(B(0,1)) + B_{\R^n}(0,C_2 \epsilon)) \leq C \epsilon
  \end{align*}
  for some $C > 0$ depending only on $n$ and $C_2$.  Thus,
  \begin{align*}
    N_\epsilon c(\epsilon/4)^N \overset{\eqref{e:plane-ball-volume}}{=} \cL^n(S) \leq \cL^n(L(B(0,1)) + B_{\R^n}(0,C_2 \epsilon)) \leq C\epsilon
  \end{align*}
  and so $N_\epsilon \leq 4^Nc^{-1}C \epsilon^{1-N}$, which proves our claim.
  
  It now remains to prove that $L(B(0,1)) \subset \bigcup_{i=1}^{N_\epsilon} B(x_i,\epsilon)$.  This follows from general packing arguments principles from metric geometry.  Suppose not.  Then there exists some $z \in L(B(0,1))$ such that $d_\infty(z,x_i) \geq \epsilon$ for all $i$.  Thus, $\{x_i\}_{i=1}^{N_\epsilon} \cup \{z\}$ is an even larger $\epsilon$-separated net in $L(B(0,1))$, contradicting our maximality assumption.  This proves the lemma in the case $L$ is injective.

  Now assume $L$ is not injective.  Then by Lemma \ref{l:inradius}, there exists some $j \in \{1,...,r\}$ so that $\dim T_L(\cV_j(\g)) < \dim \cV_j(v)$ and $T_L(\g)$ has dimension $n' \leq n-1$ in $\R^m$.  One can then see from the Jacobian of the dilation that $\cL^{n'}(B(x,r) \cap L(G)) = cr^{N'}$ for some $c > 0$ and $N' \leq N - 1$.  This allows us to continue the argument starting from \eqref{e:plane-ball-volume} and the $N'$-homogeneity of $\cL^{n'}$ allows the arguments to still work.
\end{proof}

\begin{remark} \label{r:equivalent-nets}
  Note that the result of Lemma \ref{l:nets-lemma} still holds if we pass from $d_\infty$ of $G$ and $H$ to biLipschitz equivalent (semi)metrics.  The constant $C_3$ will change only by a factor controlled by the biLipschitz equivalence.
\end{remark}

\section{Weak biLipschitzness}
In this section, we let first let $f : [a,b] \to X$ be a 1-Lipschitz map where $(X,d)$ is an arbitrary metric space.  We now recall some terminology from \cite{li}.  Given some $p \geq 1$ and $x,y \in \R$, we let
\begin{multline}
  \partial_f^{(p)}(x,y) = \frac{1}{2} \left[ \left( \frac{d(f(x),f((x+y)/2))}{|y-x|/2} \right)^p + \left( \frac{d(f((x+y)/2),f(y))}{|y-x|/2} \right)^p \right] \\
  - \left( \frac{d(f(x),f(y))}{|y-x|} \right)^p. \label{e:partial-defn}
\end{multline}
We will not actually need that $d$ satisfies the triangle inequality in this section, just that
\begin{align}
  \partial_f^{(p)}(x,y) \geq 0, \qquad \forall x,y \in X, \label{e:partial-positive}
\end{align}
which follows when $d$ is a metric by Jensen's inequality and the triangle inequality.  For us, we will be using a semimetric and so there is no {\it a priori} guarantee that $\partial_f^{(p)}(x,y)$ is positive.  This will instead follow from the properties of our carefully chosen semimetric as will be done in Lemma \ref{l:coarse-diff}.  We now define the quantity
\begin{align}
  \alpha_f^{(p)}([a,b]) = \frac{1}{2(b-a)^2} \underset{a \leq x < y \leq b}{\iint} \partial_f^{(p)}(x,y) ~dx ~dy. \label{e:alpha-defn}
\end{align}
This integral should be thought of as a $p$-convex analogue of the triple integral quantity displayed in equation (1.5) of \cite{schul-AR}.  There, the quantity integrated is the excess of the triangle inequality whereas here we are integrating the excess of the $p$-parallelogram inequality given in \eqref{e:partial-defn}.  In \cite{schul-AR}, a weighted sum of the triple integral on a subset of a 1-rectifiable set is shown to be bounded.  In a similar fashion, if $X$ satisfies \eqref{e:partial-positive}, then a weighted sum of $\alpha_f^{(p)}([a,b])$ is bounded for a Lipschitz map $f : [a,b] \to X$ (see \cite{li}*{Lemma 3.2}).

Now let $G$ be a Carnot group of Hausdorff dimension $N$, $k = \dim(\cV_1(\g))$, and $f : G \to H$ be a Lipschitz function.  We will equip $G$ with the $d_\infty$ metric but will define the metric of $H$ later.  We can extend the definition of $\alpha$ to the Christ cubes of $G$.  Let $L \geq 1$ and $Q \in \Delta$.  Then we can define
\begin{align*}
  \alpha_f^{(p)}(Q,L) = \frac{1}{(L \ell(Q))^{N-1}} \int_{S^{k-1}} \int_{z_Q(G \ominus v)} &\chi_{\{x \exp(\R v) \cap B(z_Q,L\ell(Q)) \neq \emptyset\}} \times \\
  &\alpha^{(p)}_f(x\exp(\R v) \cap B(z_Q,3L\ell(Q))) ~dx ~d\mu(v).
\end{align*}
Here, $z_Q(G \ominus v)$ is the left translate of the exponential image of the subspace of $\g$ orthogonal to $v$ by $z_Q$.  Integration in $x$ is with respect to the $N-1$ dimensional Hausdorff measure $\Hd^{N-1}$ and in $v$ is with respect to the probability measure on the unit sphere of $\cV_1(\g)$.

One may be worried that there is no guarantee that $x \exp(\R v) \cap B(z_Q,3L\ell(Q))$ is connected.  We simply specify that it be the unique connected subset $I$ that contains the subset $x \exp(\R v) \cap B(z_Q,3L\ell(Q))$.  See Lemma 3.3 and the following discussion of \cite{li}.

The following proposition shows that the $\alpha(Q,L)$ are Carleson summable in a cube $S$ with constant depending on $L$.
\begin{proposition} \label{p:alpha-L-sum}
  Let $G$ be a Carnot group and $X$ be a semimetric space satisfying \eqref{e:partial-positive}.  For each $L \geq 1$, there exists some $C_1 = C_1(L) > 0$ depending on $L$ so that if $f : G \to X$ is 1-Lipschitz and $S \in \Delta$, then we have
  \begin{align}
    \sum_{Q \in \Delta(S)} \alpha_f^{(p)}(Q,L) |Q| \leq C_1(L) |S|. \label{e:alpha-sum}
  \end{align}
\end{proposition}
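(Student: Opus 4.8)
The key point is that Proposition \ref{p:alpha-L-sum} should be a direct consequence, via Fubini's theorem, of the one-dimensional Carleson-type estimate for $\alpha_f^{(p)}$ along horizontal lines, which is \cite{li}*{Lemma 3.2}. The strategy is to unwind the definition of $\alpha_f^{(p)}(Q,L)$, interchange the order of summation over cubes and integration over the direction $v \in S^{k-1}$ and the base point $x$, and for each fixed line reduce matters to the statement that $\sum \alpha_f^{(p)}(I)$ over a nested family of intervals is controlled by the length of the containing interval.

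\begin{proof}[Proof of Proposition \ref{p:alpha-L-sum}]
Fix $L \geq 1$ and $S \in \Delta$. Using the definition of $\alpha_f^{(p)}(Q,L)$ and the bound $\ell(Q)^{N-1} \geq c|Q|\,\ell(Q)^{-1} = c|Q|/\ell(Q)$ coming from property (d) of Theorem \ref{t:cubes} (so that $|Q| \asymp \ell(Q)^N$), we have
\begin{align*}
  \sum_{Q \in \Delta(S)} \alpha_f^{(p)}(Q,L) |Q| \lesssim_L \sum_{Q \in \Delta(S)} \ell(Q) \int_{S^{k-1}} \int_{z_Q(G\ominus v)} \chi_{\{x\exp(\R v)\cap B(z_Q,L\ell(Q))\neq\emptyset\}}\, \alpha_f^{(p)}\big(x\exp(\R v)\cap B(z_Q,3L\ell(Q))\big)\, dx\, d\mu(v).
\end{align*}
I would interchange the sum over $Q$ with the integrals over $v$ and $x$. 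For fixed $v \in S^{k-1}$, the horizontal lines $\{x\exp(\R v)\}$ foliate $G$, and since the $d_\infty$-metric is biLipschitz to $d_{cc}$ and each such line is isometrically embedded, the restriction of $f$ to a line $\gamma_{x,v}$ is a Lipschitz map $\R \to X$ (after rescaling by the Lipschitz constant). On a single such line the intervals $J_Q := \gamma_{x,v}\cap B(z_Q,3L\ell(Q))$ that arise as $Q$ ranges over $\Delta(S)$ with $J_Q$ nonempty form a family of intervals each contained in $\gamma_{x,v}\cap B(z_S, C_L\ell(S))$ for some $C_L$, and the nesting and small-boundary structure of Christ cubes ensures that for each scale $k$ the intervals $J_Q$ with $j(Q)=k$ have bounded overlap and comparable length $\asymp_L \tau^k$. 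Hence the quantity $\sum_Q \ell(Q)\,\chi\,\alpha_f^{(p)}(J_Q)$ is, up to a factor $C(L)$, dominated by a weighted sum $\sum_I \mathrm{diam}(I)\,\alpha_f^{(p)}(I)$ over a Carleson family of intervals $I$ inside a fixed interval $\tilde I_{x,v}$ of length $\lesssim_L \ell(S)$, which by \cite{li}*{Lemma 3.2} is bounded by $C(L)\,\mathrm{diam}(\tilde I_{x,v}) \lesssim_L \ell(S)$.

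Integrating this one-dimensional bound over $x\in z_S(G\ominus v)$ with the $\Hd^{N-1}$ measure and over $v\in S^{k-1}$ with the probability measure $\mu$, and using that the set of base points $x$ for which $\gamma_{x,v}$ meets $B(z_S,C_L\ell(S))$ has $\Hd^{N-1}$-measure $\lesssim_L \ell(S)^{N-1}$, gives
\begin{align*}
  \sum_{Q\in\Delta(S)} \alpha_f^{(p)}(Q,L)|Q| \lesssim_L \int_{S^{k-1}}\int_{z_S(G\ominus v)} \chi\, C(L)\ell(S)\, dx\, d\mu(v) \lesssim_L \ell(S)\cdot \ell(S)^{N-1} \asymp_L |S|,
\end{align*}
which is the claim with $C_1(L)$ absorbing all the $L$-dependent constants.
\end{proof}

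\noindent\emph{Expected main obstacle.} The genuinely delicate point is the bookkeeping in the interchange of $\sum_Q$ with $\int_{S^{k-1}}\int$: one must argue that, after fixing a line $\gamma_{x,v}$, the collection of intervals $\gamma_{x,v}\cap B(z_Q,3L\ell(Q))$ indexed by $Q\in\Delta(S)$ — where different cubes $Q$ at the \emph{same} scale can contribute intervals meeting $\gamma_{x,v}$ — still forms a family with the bounded-overlap and dyadic-nesting structure needed to invoke the one-dimensional Carleson estimate \cite{li}*{Lemma 3.2} with a constant depending only on $L$ (and $G$), not on the line. This is where the metric doubling of $G$, property (d) of Theorem \ref{t:cubes}, and the precise relationship $\ell(Q)=\tau^{j(Q)}$ all get used; everything else is a routine application of Fubini and the scaling $|Q|\asymp\ell(Q)^N$.
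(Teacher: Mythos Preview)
Your approach is correct and is essentially the same as the paper's. The paper's own proof simply refers to Proposition~3.5 of \cite{li} (with $\epsilon=0$ and $m=\infty$) and notes that the scaling parameter $L$ only affects the constants; what you have written is precisely a sketch of that argument---reducing via Fubini to the one-dimensional Carleson estimate (Lemma~3.2 of \cite{li}) along each horizontal line and then integrating over base points and directions---and the bookkeeping obstacle you flag is exactly the place where the doubling of $G$ and property~(d) of Theorem~\ref{t:cubes} enter, as handled in \cite{li}.
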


\begin{proof}
  The proof is essentially that of Proposition 3.5 of \cite{li} with $\epsilon = 0$ and $m = \infty$.  Note that the second parameters of the $\alpha_f^{(p)}$ quantities defined here and in \cite{li} are not the same.  One makes the straightforward changes to account for the scaling $L$, which only changes the constants in the proof by an amount controllable by $L$.  Notably, the constant $C_2$ of that proof will change by an amount depending on $L$ and $G$.
\end{proof}

The requirement of \eqref{e:partial-positive} is used to show that $\alpha_f^{(p)}(Q,L)$ are all nonnegative.  This is then used in the next section in conjunction with \eqref{e:alpha-sum} to show that there cannot be ``too many'' $Q \in \Delta(S)$ for which $\alpha_f^{(p)}(Q,L)$ is large.

The following lemma will give the needed metric for $H$.
\begin{lemma} \label{l:coarse-diff}
  There exists $p \geq 2$, $C,\alpha,\beta,\gamma > 0$, and a homogeneous norm $N$ on $H$ whose induced semimetric $d_H$ satisfies \eqref{e:partial-positive} so that if we define $\alpha_f^{(p)}$ with respect to $d_H$, then the following property holds.  Let $\epsilon \in (0,1/2)$ and $f : G \to H$ be Lipschitz.  If
  \begin{align*}
    \alpha_f^{(p)}(Q,\gamma\epsilon^{-\alpha}) \leq e^{-\epsilon^{-\beta}} \|f\|_{lip}^p,
  \end{align*}
  then there exists a homomorphism $L : G \to H$ and $g \in H$ so that for all $x \in B(z_Q,100 \diam(Q))$, we have
  \begin{align}
    \frac{d_H(f(x),g \cdot L(x))}{\diam(Q)} \leq \epsilon \|f\|_{lip}. \label{e:eps-close-lemma}
  \end{align}
\end{lemma}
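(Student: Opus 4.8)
The lemma is a one-sided coarse-differentiation statement: the functional $\alpha_f^{(p)}(Q,L)$ records the average deviation of $f$ from an affine map along the horizontal slices of $B(z_Q,L\ell(Q))$, and the conclusion says that if this deviation is tiny at the large scale $L\ell(Q)$ then $f$ genuinely agrees with a left-translate of a homomorphism, up to additive error $\epsilon\diam(Q)$, on the much smaller ball $B(z_Q,100\diam(Q))$. The plan is first to fix the data. For $d_H$ I take the semimetric induced by a carefully chosen homogeneous norm $N$ on $H$ — for instance one of the form $N(g)^{p}=\sum_{k=1}^{s}c_{k}|g_{k}|^{p/k}$ with $p\ge 2$ and suitable $c_{k}>0$ — arranged so that $d_H$ is bi-Lipschitz to $d_\infty$, so that \eqref{e:partial-positive} holds for $\partial_f^{(p)}$ computed with $d_H$ (the general mechanism is the one recorded after \eqref{e:partial-positive}; the reason to choose $N$ by hand, even at the cost of the triangle inequality, is precisely to keep this exact), and so that the only curves realizing equality in the $p$-parallelogram bound for all central triples are images of horizontal lines $t\mapsto p\exp(tw)$, $w\in\cV_1(\h)$. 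Replacing $f$ by $x\mapsto\delta_{1/\diam(Q)}(f(z_Q)^{-1}f(z_Q\delta_{\diam(Q)}(x)))$ and using left-invariance and homogeneity, I may assume $\|f\|_{lip}=1$, $z_Q=0$, $\diam(Q)=1$, $\ell(Q)\asymp1$, and $f(0)=0$; a homomorphism for the normalized map gives one for $f$ after conjugating back, and the translation element will be $g=f(z_Q)$. Write $L=\gamma\epsilon^{-\alpha}$ and $\rho=e^{-\epsilon^{-\beta}}$.

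Next I localize. Since $\alpha_f^{(p)}(Q,L)\le\rho$ is a normalized average of the one-dimensional quantities $\alpha_f^{(p)}(x\exp(\R v)\cap B(0,3L))$ over directions $v\in S^{k-1}$ and base points $x$, Markov's inequality shows that outside a set of $(v,x)$ of measure $O(\sqrt\rho)$ the one-dimensional quantity along that slice is at most $\sqrt\rho$; call these slices \emph{good}. For a good slice the monotonicity of the double integral in \eqref{e:alpha-defn} bounds its restriction to a central sub-interval $J$ of length $\asymp100$ through a good base point $p$ by $\alpha_f^{(p)}(J)\lesssim L^{2}\sqrt\rho$, and since $e^{-\epsilon^{-\beta}}$ beats the polynomial $L^{2}$ this is below any exponentially small (in $\epsilon^{-1}$) threshold we wish, once $\beta$ is taken large enough relative to the coarse-differentiation exponent of \cite{li}. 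Invoking that one-dimensional theory, the $1$-Lipschitz map $f|_{J}$ is then within $\epsilon$ of a curve realizing equality in the $p$-parallelogram bound, hence, by the choice of $N$, within $\epsilon$ of a horizontal line. Thus to each good $v$ and good $p$ there corresponds $w_{p,v}\in\cV_1(\h)$ with $|w_{p,v}|\le1$ and $d_H(f(p\exp(tv)),f(p)\exp(tw_{p,v}))\le C\epsilon$ for $|t|\le100$.

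Now I patch. Good directions fill all but an $O(\sqrt\rho)$-fraction of $S^{k-1}$, and likewise for base points, so using the Lipschitz bound on $f$ and Lemma \ref{l:box-distort} to transfer between nearby parallel slices — the transfer errors being $O(L^{-1})=O(\epsilon^{\alpha})$, hence negligible once $\alpha$ is suitably large — one runs a rectangle-comparison argument: comparing $f$ along the two orderings $\exp(sv)\exp(tv')$ and $\exp(tv')\exp(sv)$ of a small horizontal parallelogram, whose discrepancy in $G$ is governed by $[v,v']$, forces $w_{p,v}$ to be, up to $O(\epsilon)$, independent of $p$ and to depend on $v$ through a linear map $A:\cV_1(\g)\to\cV_1(\h)$ with $\|A\|\le1$ which $O(\epsilon)$-respects every bracket relation of $\g$. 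An $O(\epsilon)$-perturbation of $A$ is then an honest linear map respecting all relations; by \eqref{e:layer-inclusion} it extends uniquely to a Lie algebra homomorphism $T_L:\g\to\h$ and hence to a $1$-Lipschitz homomorphism $L:G\to H$. Finally, given $x\in B(0,100)$, join $0$ to $x$ by a bounded-length horizontal path of segments of length $\le100$ in (near-)good directions; applying the previous step along each segment and comparing with the matching segment of $L$, then summing errors via the quasi-triangle inequality for $d_H$, yields $d_H(f(x),L(x))\le\epsilon$, which is \eqref{e:eps-close-lemma} with $g=f(z_Q)$ after undoing the normalization. The constants are fixed at the end so that the smallness $\rho$ and the zoom-out $L$ exceed the requirements of the one-dimensional estimate and of all the $\epsilon$-power losses above, giving $p$, $C$, $\alpha$, $\beta$, $\gamma$ depending only on $G$ and $H$.

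The main obstacle is the patching step, and within it the verification that the approximate horizontal derivative respects the defining relations of $\g$ and can be perturbed to an exact homomorphism: this is the genuinely Carnot-specific content, vacuous in the Euclidean prototype where $\g$ and $\h$ are abelian and ``affine along all horizontal lines'' just means ``linear'', and it is what forces the use of a two-parameter family of good slices together with the coordinate estimates of Lemma \ref{l:box-distort}. A secondary difficulty is the construction of the norm $N$ with the required positivity \eqref{e:partial-positive} and rigidity of equality curves, together with the bookkeeping that threads the various $\epsilon$-power and measure losses so that the hypothesis can be stated in the clean form $e^{-\epsilon^{-\beta}}$.
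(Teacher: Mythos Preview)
Your proposal is an outline of the argument that the paper defers entirely to \cite{li}. The paper's own proof is a citation: it takes the norm $N$ and exponent $p=2s!$ from Proposition~7.2 of \cite{li}, reads off \eqref{e:partial-positive} from equation~(89) there, and obtains \eqref{e:eps-close-lemma} by invoking Lemma~6.12 and equation~(55) of \cite{li} applied to a suitable dilate of $Q$. Your sketch is therefore not a different route but a partial reconstruction of the cited material, and its architecture---Markov to isolate good horizontal slices, one-dimensional coarse differentiation on each good slice to obtain an approximating horizontal segment in $H$, then a rectangle/commutator comparison to force the segment directions to assemble into a single homomorphism---is indeed the strategy carried out in \cite{li}.

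The step that most needs shoring up is ``An $O(\epsilon)$-perturbation of $A$ is then an honest linear map respecting all relations.'' The set of $A\in\operatorname{Hom}(\cV_1(\g),\cV_1(\h))$ that extend to graded Lie algebra homomorphisms is a real algebraic subvariety, and passing from ``$A$ satisfies the defining equations up to $O(\epsilon)$'' to ``$A$ lies within $O(\epsilon)$ of the variety'' is not automatic; one needs a quantitative stability input (a \L{}ojasiewicz-type inequality, or a device such as factoring through the free nilpotent group on $\cV_1(\g)$ where the relations are vacuous). This is dealt with in \cite{li}, and the possible exponent loss here is part of why the hypothesis carries the shape $e^{-\epsilon^{-\beta}}$ rather than a power of $\epsilon$. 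Two smaller points: the transfer error between nearby parallel slices is governed by the density of good slices, hence by a power of $\rho$, not by $L^{-1}$ as you write; and Lemma~\ref{l:box-distort} is a statement about right translation in $H$, not about moving between parallel horizontal lines in $G$, so it is not the correct reference for that step, though the analogous BCH estimate in $G$ is what you actually need.
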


\begin{proof}
  The norm is the one given by Proposition 7.2 of \cite{li}.  That $\partial_f^{(p)}(x,y) \geq 0$ follows from equation (89) of that proposition (using the norm $N$ to induce the corresponding metric $d_H(x,y) = N(x^{-1}y)$), which shows that for $p = 2s!$, we have that there exists some $C > 0$ so that
  \begin{align*}
    2^{-p} |x-y|^p \partial_f^{(p)}(x,y) \geq C(|\pi(f(x)) - \pi(f(y))|^p + NH(f(x)^{-1}f(y))^p) \geq 0.
  \end{align*}
  Here, $\pi : H \to \R^{\dim \cV_1(\h)}$ is the 1-Lipschitz homomorphic projection of the first layer and $NH(g) = d_H(g,\exp(g_1))$ can be thought of as a measure of how nonhorizontal $g$ is.
  
  The result then follows from using Lemma 6.12 together with equation (55) of the same paper.  Specifically, equation (55) says that there is some constant $C' > 0$ depending only on $H$ so that $\beta_f^{(p)} \leq C' \alpha_f^{(p)}$.  Here, we remind the reader that parameters of the $\alpha_f^{(p)}$ are not the same in this paper as in \cite{li}.  What is defined as $\alpha_f^{(p)}(Q,L)$ in this paper is equivalent to $\alpha_f^{(p)}(\frac{L}{2}Q,0)$ in \cite{li}.  The $\epsilon$ stays the same, and we take $\psi = 0$ and $\text{Lip}_f(\psi) = \|f\|_{lip}$ in Lemma 6.12 as we are dealing with Lipschitz functions (for the definition of $\text{Lip}_f$, see the last displayed equation of p. 4621 of \cite{li}).  All the constants used in Lemma 6.12 are given by the choice of $d_H$.  Lemma 6.12 then derives from a bound on (our current version of) $\alpha_f^{(p)}(Q,1)$ a statement like \eqref{e:eps-close-lemma} on a small subball of $Q$ centered around $x_Q$.  Here, the $\beta$ in the statement of this lemma will depend on $\beta_0,r,s,\alpha_1$ used in Lemma 6.12.  It follows easily then that a similar bound on a dilate of $Q$---as in our hypothesis---gives our needed result on $B(z_Q,100 \diam(Q))$.
\end{proof}

From now on, we now endow $H$ with the metric $d_H$ induced by the norm of Lemma \ref{l:coarse-diff}.  Note that we never stated that $d_H$ satisfies the triangle inequality.  Thus, let $C_Q \geq 1$ be such that
\begin{align*}
  d_H(x,y) \leq C_Q(d_H(x,z) + d_H(z,y)), \qquad \forall x,y,z \in H.
\end{align*}

The properties of cubes given by Theorem \ref{t:cubes} easily imply that there exists some $b \in (0,1/10)$ depending only on $G$ so that if $x,y \in G$ and $Q$ is the smallest cube containing $x$ such that $y \in 2Q$ (recall the notation from \eqref{e:lambda-E}), then
\begin{align}
  d(x,y) \geq 10b \diam(Q). \label{e:b-defn}
\end{align}
We fix this $b$ for the rest of the paper.

The following lemma is the main result of this section.  It says that if a cube $Q$ has an image under $f$ with large Hausdorff content but small $\alpha_f^{(p)}$, then it must push far away points apart.  A function that satisfies the result of this lemma is sometimes called weakly biLipschitz.
\begin{lemma} \label{l:weak-bilip}
  There exists some $c_1 > 0$ depending only on $G$ and $H$ so that for each $\delta > 0$, if $f : G \to H$ is 1-Lipschitz, $Q \in \Delta$ so that
  \begin{align*}
    \Hd^N_\infty(f(Q)) > c_1 \delta |Q|
  \end{align*}
  and
  \begin{align*}
    \alpha_f^{(p)}(Q,\gamma\delta^{-\alpha}) < e^{-\delta^{-\beta}},
  \end{align*}
  then for each $x,x' \in 2Q$ such that
  \begin{align}
    d(x,x') > b \diam(Q) \label{e:x-x'-sep}
  \end{align}
  we have that
  \begin{align*}
    d_H(f(x),f(x')) > \delta d(x,x'),
  \end{align*}
\end{lemma}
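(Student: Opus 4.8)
The plan is to combine the near-affine approximation of Lemma~\ref{l:coarse-diff} with the nets estimate of Lemma~\ref{l:nets-lemma}, using a contradiction argument on the Hausdorff content. First I would apply Lemma~\ref{l:coarse-diff} with $\epsilon$ a small constant multiple of $\delta$ (chosen later, and absorbing the factor $\gamma$ so that the two hypotheses on $\alpha_f^{(p)}(Q,\gamma\delta^{-\alpha})$ line up): the smallness of $\alpha_f^{(p)}(Q,\gamma\delta^{-\alpha})$ yields a homomorphism $L : G \to H$ and a point $g \in H$ with
\begin{align*}
  \frac{d_H(f(y), g\cdot L(y))}{\diam(Q)} \leq c\,\delta, \qquad \forall y \in B(z_Q, 100\diam(Q)),
\end{align*}
where $c$ is an absolute constant and we take $B(z_Q,100\diam Q) \supseteq 2Q$. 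Now suppose toward a contradiction that $d_H(f(x),f(x')) \leq \delta\, d(x,x')$ for some $x,x' \in 2Q$ with $d(x,x') > b\diam(Q)$. Then, using the quasi-triangle inequality for $d_H$ with constant $C_Q$ and the affine approximation at both $x$ and $x'$, one gets
\begin{align*}
  d_H(g\cdot L(x),\, g\cdot L(x')) \leq C_Q^2\big(\delta\, d(x,x') + 2c\,\delta\diam(Q)\big) \leq C\,\delta\,\diam(Q),
\end{align*}
since $d(x,x') \leq \diam(2Q) \lesssim \diam(Q)$. By left-invariance, $d_H(L(x),L(x')) = N_\infty(L(x)^{-1}L(x')) = N_\infty(L(x^{-1}x'))$, and setting $h = x^{-1}x'$ we have $N_\infty(h) = d(x,x') \asymp \diam(Q)$ while $N_\infty(L(h)) \leq C\delta\diam(Q)$; hence $N_\infty(L(h)) < \epsilon' N_\infty(h)$ with $\epsilon' = C'\delta$. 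Thus the hypothesis of Lemma~\ref{l:nets-lemma} holds for $L$ with collapse parameter $\epsilon'$.

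Next I would transfer this collapse to the image of $f$. Apply Lemma~\ref{l:nets-lemma} (and Remark~\ref{r:equivalent-nets} to pass to the actual metric $d_H$, which is only quasi-isometric to $d_\infty$) to cover $L(B(z_Q, C''\diam(Q)))$ — a ball large enough to contain $g^{-1}f(2Q)$ after the affine approximation — by at most $C_3 (\epsilon')^{1-N}$ balls of radius $\epsilon' C''\diam(Q)$. Left-translating by $g$ and using the affine approximation once more, the same number of balls of radius $\asymp \epsilon'\diam(Q) \asymp \delta\diam(Q)$ (after enlarging by the quasi-triangle constant) cover $f(2Q) \supseteq f(Q)$. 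This gives a covering of $f(Q)$ by at most $C_3(C'\delta)^{1-N}$ sets each of diameter $\leq C'''\delta\diam(Q)$, so
\begin{align*}
  \Hd^N_\infty(f(Q)) \leq C_3 (C'\delta)^{1-N} \cdot \big(C'''\delta\diam(Q)\big)^N = C_4\,\delta\,\diam(Q)^N \leq C_5\,\delta\,|Q|,
\end{align*}
using $\diam(Q) \asymp \ell(Q)$ and $|Q| \asymp \ell(Q)^N$. Choosing $c_1$ larger than $C_5$ contradicts the hypothesis $\Hd^N_\infty(f(Q)) > c_1\delta|Q|$, completing the proof.

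The main obstacle I expect is bookkeeping the constants and radii carefully: one has to make sure that the ball on which Lemma~\ref{l:coarse-diff} provides the affine approximation ($B(z_Q,100\diam Q)$) is large enough to contain $2Q$ and the preimages used, that the radius inflation coming from the quasi-triangle inequality constant $C_Q$ (applied a bounded number of times) is absorbed into the final constant, and — most delicately — that the exponent matching works: the $\epsilon^{1-N}$ blow-up in the number of nets in Lemma~\ref{l:nets-lemma} is exactly compensated by the $\epsilon^N$ shrinkage in the $N$-th power of the radius, leaving a single clean power of $\delta$ (equivalently $\epsilon'$). A secondary subtlety is that $d_H$ is only a semimetric, so every use of the triangle inequality must go through $C_Q$, and the application of Lemma~\ref{l:nets-lemma} must invoke Remark~\ref{r:equivalent-nets}; but none of these affect the structure of the argument, only the value of $c_1$.
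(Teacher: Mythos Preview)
Your proposal is correct and follows essentially the same approach as the paper: apply Lemma~\ref{l:coarse-diff} to get the affine approximation, deduce that $L$ collapses some element (you find it explicitly as $h=x^{-1}x'$, whereas the paper phrases this as a dichotomy on whether $L$ is uniformly $\delta$-expanding), then invoke Lemma~\ref{l:nets-lemma} and Remark~\ref{r:equivalent-nets} to cover $f(Q)$ by $\lesssim \delta^{1-N}$ balls of radius $\lesssim \delta\,\ell(Q)$, giving $\Hd^N_\infty(f(Q)) \leq C_5\delta|Q|$ and contradicting the content hypothesis. The only cosmetic difference is that the paper organizes the argument as a two-case split rather than a proof by contradiction; the constants, lemmas, and exponent bookkeeping are identical.
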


\begin{proof}
  By our assumption of $\alpha_f^{(p)}$ and Lemma \ref{l:coarse-diff}, there exists a homomorphism $L : G \to H$ and $g \in H$ so that for all $x \in B(z_Q, 100 \diam(Q))$ we have
  \begin{align}
    d_H(f(x),g \cdot L(x)) \leq \delta \diam(Q). \label{e:eps-close}
  \end{align}
  Note that $x,x' \in 2Q \subseteq B(z_Q, 100 \diam(Q))$.  If for all $z \in G$, we have $d_H(L(z),0) \geq 4b^{-1}C_Q^2 \delta d(z,0)$, then we are done because
  \begin{multline*}
    d_H(f(x),f(x')) \geq \frac{1}{C_Q^2} d_H(L(x),L(x')) - d_H(f(x),L(x)) - d_H(f(x'),L(x')) \\
    \overset{\eqref{e:eps-close}}{\geq} \frac{4\delta}{b} d(x,x') - 2\delta \diam(Q) \overset{\eqref{e:x-x'-sep}}{\geq} (4\delta - 2\delta) \diam(Q) \geq 2\delta \diam(Q) \geq \delta d(x,x').
  \end{multline*}
  In the last inequality, we used the fact that $x,x' \in 2Q$.
  
  Thus, we may suppose that there exists some $z \in G$ so that
  \begin{align*}
    d_H(L(z),0) < 4b^{-1}C_Q^2 \delta d(z,0).
  \end{align*}
  By Lemma \ref{l:nets-lemma} and Remark \ref{r:equivalent-nets}, there exists some $C_4 > 0$ depending only on $G$ and $H$ and points $\{x_i\}_{i=1}^{N_\delta} \subset L(B(z_Q, \tau\ell(Q)))$ so that $N_\delta \leq C_4 \delta^{1-N}$ and
  \begin{align*}
    L(Q) \subseteq L(B(z_Q, \tau\ell(Q))) \subset \bigcup_{i=1}^{N_\delta} B(x_i, 4b^{-1} C_Q^2 \delta \tau \ell(Q)).
  \end{align*}
  Note then that
  \begin{align}
    f(Q) \subseteq \bigcup_{i=1}^{N_\delta} B(x_i, C_Q(2 + 4b^{-1} C_Q^2)\delta \tau \ell(Q) ). \label{e:fQ-contain}
  \end{align}
  Indeed, we have that as $Q \subseteq B(z_Q, \tau\ell(Q))$, we have for any $x \in Q$ that there exists some $i$ so that $d_H(L(x),x_i) \leq 4C_Q^2b^{-1} \delta \tau \ell(Q)$.  Thus,
  \begin{multline*}
    d_H(f(x),x_i) \leq C_Q( d_H(f(x),L(x)) + d_H(L(x),x_i)) \\
    \overset{\eqref{e:eps-close}}{\leq} C_Q \left( 2\delta \tau \ell(Q) + \frac{4C_Q^2}{b} \delta \tau \ell(Q) \right) \leq C_Q(2 + 4b^{-1}C_Q^2)\delta \tau \ell(Q).
  \end{multline*}
  It follows from \eqref{e:fQ-contain} that there exists some $C_5 > 0$ depending only on $G$ and $H$ so that
  \begin{align*}
    \Hd^N_\infty(Q) \leq N_\delta \left[ 2C_Q(2 + 4b^{-1} C_Q^2) \tau\right]^N \delta \ell(Q)^N \leq C_5 \delta |Q|.
  \end{align*}
  Here we used the fact that $\ell(Q)^N$ is comparable to $|Q|$.  Choosing $c_1$ small enough, we get a contradiction.
\end{proof}

\section{Proof of main theorem}
The proof is relatively standard and follows the arguments of \cite{david-semmes}*{p. 867} \cite{jones}*{p. 119} (see also \cite{david-wavelets}*{Lemma 8.4}).  As proving the theorem for one homogeneous (semi-)metric on $H$ immediately implies the same result for all homogeneous (semi-)metrics on $H$ with just modified constants, we are free to assign any homogeneous (semi-)metric to $H$.  We will equip $H$ with the homogeneous $d_H$ semimetric from Lemma \ref{l:coarse-diff}.  We remind the reader that $d_H$ is not a metric but that this is fine since all the lemmas and propositions used in this section do not require that it is a metric.  By scale invariance, we may suppose that $R = 1$.

We first specify an $\epsilon > 0$ small enough, depending only on $G$ and $\delta$, so that $|B(x,1) \backslash B(x,(1-\epsilon))| < \delta$.  Then as $f$ is 1-Lipschitz, we get that
\begin{align*}
  \Hd_\infty^N(f(B(x,1) \backslash B(x,(1-\epsilon)))) \leq \Hd^N(f(B(x,1) \backslash B(x,(1-\epsilon)))) < \delta.
\end{align*}
We can now specify a $j$ small enough depending only on $\delta$ and $G$ so that if $Q \in \Delta_j$ is a cube such that $Q \cap B(x,1-\epsilon) \neq \emptyset$, then all the horizontal line segments $x \exp(\R v) \cap B(z_Q,3L\ell(Q))$ needed in the calculation of $\alpha_f^{(p)}(Q,\gamma \delta^{-\alpha})$ are contained in $B(x,1)$.  The number and collective volume of such cubes are bounded by constants depending only on $\delta$ and $G$.  Thus, if we show the result for each of these cubes, we can take the union of all the biLipschitz pieces (of which there is a controlled number) to get our needed biLipschitz decomposition of $f$ on $B(x,1)$.  We now let $S$ be one of these cubes and we will prove the statement of the theorem for $S$ in place of $B(x,R)$.  Due to all the initial work we did, we may assume $f$ is defined on all of $G$.

Define the following families of cubes
\begin{align*}
  \cB_1 &= \{Q \in \Delta(S) : \Hd^N_\infty(f(Q)) < c_1\delta|Q|\}, \\
  \cB_2 &= \{Q \in \Delta(S) : \alpha_f^{(p)}(Q,\gamma \delta^{-\alpha}) \geq e^{-\delta^{-\beta}}\}.
\end{align*}
Given a cube $Q$, let $\hat{Q}$ be the union of cubes of $\Delta_{j(Q)}$ that intersect $2Q$.  Given $L > 0$, we define
\begin{align*}
  R_2 = \left\{ x \in S : \sum_{Q \in \cB_2} \chi_{\hat{Q}}(x) \geq L \right\}.
\end{align*}
We have that
\begin{align*}
  \int_S \sum_{Q \in \cB_2} \chi_{\hat{Q}}(x) ~dx \leq C \sum_{\cB_2} |Q| \leq Ce^{\delta^{-\beta}} \sum_{Q \in \Delta(S)} \alpha_f^{(p)}(Q,\gamma \delta^{-\alpha}) |Q| \overset{\eqref{e:alpha-sum}}{\leq} C_1(\gamma\delta^{-\alpha})Ce^{\delta^{-\beta}} |S|
\end{align*}
Thus, there exists some $L > 1$ depending only on $G$, $H$, and $\delta$ so that $|R_2| < \delta |S|$.  Let $R_1 = \bigcup_{Q \in \cB_1} Q$.  Using the definition of $\cB_1$, we have that
\begin{align*}
  \Hd^N_\infty(f(R_1)) < c_1 \delta \Hd_\infty^N(R_1) \leq c_1 \delta |S|.
\end{align*}
Likewise, as $f$ is 1-Lipschitz and $|R_2| < \delta |S|$, we have that $\Hd_\infty^N(f(R_2)) < \delta |S|$.  Thus, we have that
\begin{align*}
  \Hd_\infty^N(f(R_1 \cup R_2)) < (1 + c_1) \delta |S|.
\end{align*}

It remains to decompose $S \backslash (R_1 \cup R_2)$ into $M$ biLipschitz pieces.  We use the usual encoding scheme, which we will give a sketch of right now.

Let $l \geq 1$ be large enough so that if $Q \in \Delta_k$ and $S \in \Delta_{k+l}$, then $\diam Q < b \diam(S)$.  Then for each $k$ and $Q \in \Delta_k \cap \Delta(S)$, we let $\cF(Q)$ denote the set of cubes $Q' \in \Delta_k \cap \Delta(S)$ such that $Q' \neq Q$ and $Q$ and $Q'$ are both contained in some $\hat{S}$ for some $S \in \Delta_{k+l} \cap \cB_2$.  As $G$ is doubling, we get that there exists some $T \geq 1$ so that $\#\cF(Q) \leq T$ for all $Q \in \Delta(S)$.

Let $A$ be a set of $T+1$ distinct elements.  We will associate to each $Q \in \Delta(S)$ an (possibly empty) ordered string of of characters from $A$ (a word) that we will denote $a(Q)$.  For any $Q \in \Delta$, we let $Q^*$ be the unique parent of $Q$.  The words that we assign will satisfy the following property: $a(S) = \emptyset$, $a(Q) = a(Q^*)$ if $\cF(Q) = \emptyset$, if $\cF(Q) \neq \emptyset$ then $a(Q)$ will be the word $a(Q^*)$ appended with an additional element from $A$ at the end so that if $Q'$ is another cube of $\cF(Q)$ then
\begin{itemize}
  \item $a(Q) \neq a(Q')$ when $a(Q)$ and $a(Q')$ are of equal length,
  \item when $a(Q)$ is shorter than $a(Q')$ then $a(Q)$ does not begin with $a(Q')$,
  \item when $a(Q')$ is shorter than $a(Q)$ then $a(Q')$ does not begin with $a(Q)$.
\end{itemize}
Such an association can be done recursively.  We omit the details but the reader can consult \cite{david-wavelets}*{p. 82} or \cite{jones}*{p. 120} for full details.

Note that if $x \notin R_2$, then the number of cubes $Q$ containing $x$ for which $\cF(Q) \neq \emptyset$ is bounded by $L$.  Thus, there must be some $Q \in \Delta(S)$ containing $x$ for which if $Q' \subset Q$ is any other cube and $Q'$ also contains $x$, then $a(Q') = a(Q)$.  That is, the code stabilizes.  We can then associate to $x$ the word $a(x) = a(Q)$ for this $Q$.  It follows that $a(x)$ is a word of at most $L$ letters.  Thus, we have partitioned $S \backslash (R_1 \cup R_2)$ into at most $(T+1)^L$ measurable sets $\{F_\omega\}_{\omega \in A^{L+1}}$ based on each point's word assignment.  It remains to prove that if $x,y \in F_i$, then $d_H(f(x),f(y)) \geq \delta d(x,y)$.

Let $x,y \in F_\omega$ be two distinct points and let $Q$ be the smallest cube such that $x \in Q$ and $y \in 2Q$.  If $Q \notin \cB_2$, then Lemma \ref{l:weak-bilip} gives us our needed result.  Thus, we may suppose that $Q \in \cB_2$.  We let $Q_0,Q_1 \in \Delta_{j(Q)+l}$ so that $x \in Q_0$ and $y \in Q_1$.  As $d(x,y) \geq 10b \diam(S) > \diam(Q_0)$ by definition of $b$ and $l$, we get that $Q_0 \neq Q_1$.  Thus, $Q_1 \in \cF(Q_0)$ and so by the rules of the assignment of words to cubes, we have that $a(x) \neq a(y)$.  This contradicts our assumption that $x,y \in F_\omega$.
\qed

\begin{remark}
  We may replace $B(x,R)$ in the statement of Theorem \ref{t:main} by any cubes $Q$.  Property \eqref{e:small-boundaries} in Theorem \ref{t:cubes} allows us to make our initial contraction as we did for $B(x,1)$ with $B(x,1-\epsilon)$.  The rest of the proof is exactly the same.
\end{remark}

\begin{bibdiv}
\begin{biblist}


\bib{breuillard}{misc}{
  title = {Geometry of locally compact groups of polynomial growth and shape of large balls},
  author = {Breuillard, E.},
  note = {{\tt arXiv:0704.0095}},
  year = {2007},
}

\bib{christ}{article}{
  title = {A $T(b)$ theorem with remarks on analytic capacity and the Cauchy integral},
  author = {Christ, M.},
  journal = {Colloq. Math.},
  volume = {60/61},
  number = {2},
  pages = {601-628},
  year = {1990},
}

\bib{david}{article}{
  title = {Morceaux de graphes Lipschitziens et int\'egrales singuli\`eres sur un surface},
  author = {David, G.},
  journal = {Rev. Mat. Iberoam.},
  volume = {4},
  number = {1},
  pages = {73-114},
  year = {1988},
}

\bib{david-wavelets}{book}{
  title = {Wavelets and singular integrals on curves and surfaces},
  author = {David, G.},
  series = {Lecture Notes in Mathematics},
  volume = {1465},
  publisher = {Springer-Verlag},
  year = {1991},
}

\bib{david-gc}{article}{
  title = {Bi-Lipschitz pieces between manifolds},
  author = {David, G.C.},
  journal = {Rev. Mat. Iberoam.},
  note = {To appear},
}

\bib{david-semmes}{article}{
  title = {Quantitative rectifiability and Lipschitz mappings},
  author = {David, G.},
  author = {Semmes, S.},
  journal = {Trans. Amer. Math. Soc.},
  volume = {337},
  number = {2},
  year = {1993},
  pages = {855-889},
}


\bib{guivarch}{article}{
  title = {Croissance polyn\^{o}miale et p\'eriodes des fonctions harmoniques},
  author = {Guivarc'h, Y.},
  journal = {Bull. Sc. Math. France},
  volume = {101},
  pages = {353-379},
  year = {1973},
}


\bib{heinonen-semmes}{article}{
  title = {Thirty-three yes or no questions about mappings, measures, and metrics},
  author = {Heinonen, J.},
  author = {Semmes, S.},
  journal = {Conform. Geom. Dyn.},
  volume = {1},
  year = {1997},
  pages = {1-12},
}

\bib{jones}{article}{
  title = {Lipschitz and bi-Lipschitz functions},
  author = {Jones, P.},
  journal = {Rev. Mat. Iberoam.},
  volume = {4},
  number = {1},
  year = {1988},
  pages = {115-121},
}

\bib{ledonne}{article}{
  title = {A metric characterization of Carnot groups},
  author = {Le Donne, E.},
  journal = {Proc. Amer. Math. Soc.},
  volume = {132},
  year = {2015},
  pages = {845-849},
}

\bib{llr}{misc}{
  title = {Ahlfors-regular distances on the Heisenberg group without biLipschitz pieces},
  author = {Le Donne, E.},
  author = {Li, S.},
  author = {Rajala, T.},
  year = {2015},
  note = {Preprint},
}

\bib{li}{article}{
  title = {Coarse differentiation and quantitative nonembeddability for Carnot groups},
  author = {Li, S.},
  journal = {J. Funct. Anal.},
  volume = {266},
  pages = {4616-4704},
  year = {2014},
}

\bib{magnani}{article}{
  title = {Differentiability and area formula on stratified Lie groups},
  author = {Magnani, V.},
  journal = {Houston J. Math.},
  volume = {27},
  number = {2},
  pages = {297-323},
  year = {2001},
}

\bib{meyerson}{article}{
  title = {Lipschitz and bilipschitz maps on Carnot groups},
  author = {Meyerson, W.},
  journal = {Pac. J. Math},
  volume = {263},
  number = {1},
  pages = {143-170},
  year = {2013},
}

\bib{montgomery}{book}{
  title = {A tour of sub-Riemannian geometries, their geodesics and applications},
  author = {Montgomery, R.},
  volume = {91},
  series = {Mathematical Surveys and Monographs},
  publisher = {American Mathematical Society},
  year = {2002},
}

\bib{pansu}{article}{
  title = {M\'etriques de Carnot-Carath\'eodory et quasiisom\'etries des espaces sym\'etriques de rang un},
  author = {Pansu, P.},
  journal = {Ann. Math. (2)},
  volume = {129},
  number = {1},
  pages = {1-60},
  year = {1989},
}


\bib{schul-AR}{article}{
  title = {Ahlfors-regular curves in metric spaces},
  author = {Schul, R.},
  journal = {Ann. Acad. Sci. Fenn. Math.},
  volume = {32},
  pages = {437-460},
  year = {2007},
}

\bib{schul}{article}{
  title = {Bi-Lipschitz decomposition of Lipschitz functions into a metric space},
  author = {Schul, R.},
  journal = {Rev. Mat. Iberoam.},
  volume = {25},
  number = {2},
  pages = {521-531},
  year = {2009},
}

\end{biblist}
\end{bibdiv}

\end{document}